\numberwithin{equation}{section}
\newtheorem{theorem}{Theorem}[section]
\newtheorem{lemma}[theorem]{Lemma}
\theoremstyle{remark}
\newtheorem{remark}{Remark}[section]
\theoremstyle{definition}
\newcommand{\R}{\mathbb{R}}
\newcommand{\N}{\mathbb{N}}
\newcommand{\xx}{\langle x\rangle}
\begin{document}


\title[Counterexamples to Strichartz Estimates]
{Counterexamples to Strichartz estimates for the magnetic Schr\"odinger equation}

\author{Luca Fanelli}
\address{Luca Fanelli:
Universidad del Pais Vasco, Departamento de
Matem$\acute{\text{a}}$ticas, Apartado 644, 48080, Bilbao, Spain}
\email{luca.fanelli@ehu.es}

\author{Andoni Garcia}
\address{Andoni Garcia: Universidad del Pais Vasco, Departamento de
Matem$\acute{\text{a}}$ticas, Apartado 644, 48080, Bilbao, Spain}
\email{andoni.garcia@ehu.es}

\thanks{The second author is supported by the grant BFI06.42 of the Basque
Government}

\begin{abstract}
  In space dimension $n\geq3$, we consider the magnetic Schr\"odinger Hamiltonian $H=-(\nabla-iA(x))^2$ and the corresponding Schr\"odinger equation
  \begin{equation*}
    i\partial_tu+Hu=0.
  \end{equation*}
  We show some explicit examples of potentials $A$, with less than Coulomb decay, for which any solution of this equation cannot satisfy Strichartz estimates, in the whole range of Schr\"odinger admissibility.
\end{abstract}

\date{\today}

\subjclass[2000]{35L05, 58J45.}
\keywords{%
Strichartz estimates, dispersive equations, Schr\"odinger equation,
magnetic potential}

\maketitle

\section{Introduction}\label{sec:introd}

Recently, a lot of attention has been devoted to the measurement of the precise decay rate of solutions of dispersive equations. A family of a priori estimates including time-decay, Strichartz, local smoothing and Morawetz estimates are in a sense the core of the linear and nonlinear theory, with immediate applications to local and global well-posedness, scattering and low regularity Cauchy problems. This kind of problems concerns with some of the fundamental dispersive equations in Quantum Mechanics, including among the others the Schr\"odinger, wave, Klein-Gordon and Dirac equations. Strichartz estimates appear in \cite{S}, by R. Strichartz, first; later, the basic framework from the point of view of PDEs was given in the well-known paper \cite{GV} by J. Ginibre and G. Velo, and then completed in \cite{KT} by M. Keel and T. Tao, proving the endpoint estimates.

The Strichartz estimates for the Schr\"odinger equation are the following:
\begin{equation}\label{eq:strifree}
  \left\|e^{it\Delta}f\right\|_{L^p_tL^q_x}\leq C\|f\|_{L^2_x},
\end{equation}
for any couple $(p,q)$ satisfying the Schr\"odinger admissibility condition
\begin{equation}\label{eq:admis}
  \frac2p=\frac n2-\frac nq
  \qquad
  p\geq2,
  \quad
  p\neq2\ \text{if }n=2,
\end{equation}
where $n$ is the space dimension. The importance in the applications leads naturally to consider perturbations of the Schr\"odinger operator $H_0=-\Delta$ by linear lower-order terms; in this paper, we will deal with a purely magnetic Schr\"odinger Hamiltonian $H$ of the form
\begin{equation}\label{eq:hamiltonianintr}
  H=-(\nabla-iA(x))^2,
\end{equation}
where $A:\R^n\to\R^n$ is a magnetic potential, describing the interaction of a free particle with an external magnetic field. The magnetic field $B$, which is the physically measurable quantity, is given by
\begin{equation}\label{eq:B}
  B\in\mathcal M_{n\times n},
  \qquad
  B=DA-(DA)^t,
\end{equation}
i.e. it is the anti-symmetric gradient of the vector field $A$ (or, in geometrical terms, the differential $dA$ of the 1-form which is standardly associated to $A$). In dimension $n=3$ the action of $B$ on vectors is identified with the vector field $\text{curl}A$, namely
\begin{equation}\label{eq:B3}
  Bv=\text{curl}A\times v
  \qquad
  n=3,
\end{equation}
where the cross denotes the vectorial product in $\R^3$.
The investigation on Strichartz estimates for solutions of the magnetic Schr\"odinger equation
\begin{equation}\label{eq:schromagnintr}
  \begin{cases}
    i\partial_tu(t,x)+Hu(t,x)=0
    \\
    u(0,x)=f(x)
  \end{cases}
\end{equation}
is actually in course of development. In the last years, this problem was studied in the papers \cite{DF,EGS1,EGS,GST}, by functional calculus techniques involving Spectral Theorem and resolvent estimates. Later, in \cite{DFVV}, Strichartz estimates, comprehensive of the endpoint, are stated as a consequence of the local smoothing estimates proved in \cite{FV} via integration by parts. The results in the above mentioned papers seem to show that the regularity $|A|\sim|x|^{-1}$, which is the one of the Coulomb potential, should be a threshold for the validity of Strichartz estimates. Indeed, the behavior of $A$, in all those results, is of the type $|A|\sim|x|^{-1-\epsilon}$, as $|x|\to\infty$. Since there is no heuristic, based on scaling arguments, showing that the Coulomb decay is critical for Strichartz estimates, this remains a conjecture until the lack of counterexamples for Strichartz estimates is not overcome.

For electric Schr\"odinger Hamiltonians of the type $-\Delta+V(x)$, some explicit counterexamples are given in the paper \cite{GVV}. In that setting, the critical regularity for the electric potential $V$ is the inverse square one $|V|\sim|x|^{-2}$; the counterexamples in \cite{GVV} are based on potentials of the form
\begin{equation*}
  V(x)=(1+|x|^2)^{-\frac\alpha2}\omega\left(\frac{x}{|x|}\right),
  \qquad
  0<\alpha<2,
\end{equation*}
where $\omega$ is a positive scalar function, homogeneous of degree 0, which has a non-degenerate minimum point $P\in S^{n-1}$. Moreover, it is crucial there to assume that $w(P)=0$. The main idea is to approximate $H$, by a second order Taylor expansion, with an harmonic oscillator. Then the condition $\alpha<2$ causes the lack of global (in time) dispersion.

In this paper, we produce some explicit examples of magnetic potentials $A$, with less than Coulomb decay, for which Strichartz estimates for equation \eqref{eq:schromagnintr} fail. Before stating the main results, we introduce some notations. Let us consider the $2\times2$ anti-symmetric matrix
\begin{equation*}
  \sigma
  :=
  \left(
  \begin{array}{cc}
    0 & 1
    \\
    -1 & 0
  \end{array}\right).
\end{equation*}
For any even $n=2k\in\N$, we denote by $\Omega_n$ the $n\times n$ anti-symmetric matrix generated by $k$-diagonal blocks of $\sigma$, in the following way:
\begin{equation}\label{eq:Omega}
  \Omega_n
  :=
  \left(
  \begin{array}{cccc}
    \sigma & 0 & \cdots & 0
    \\
    0 & \sigma & \cdots & 0
    \\
    \vdots & \vdots & \ddots & \vdots
    \\
    0 & \cdots & 0 & \sigma
  \end{array}
  \right)
\end{equation}
Our first result is for odd space dimensions.
\begin{theorem}\label{thm:odd}
  Let $n\geq3$ be an odd number and let us consider the following anti-symmetric $n\times n$-matrix
  \begin{equation}\label{eq:Modd}
    M
    :=
    \left(
    \begin{array}{cc}
      \Omega_{n-1} & 0
      \\
      0 & 0
    \end{array}
    \right),
  \end{equation}
  where $\Omega_{n-1}$ is the $(n-1)\times(n-1)$-matrix defined in \eqref{eq:Omega}.
  Let $A$ be the following vector-field:
  \begin{equation}\label{eq:A}
    A(x)=|x|^{-\alpha}Mx,
    \qquad
    1<\alpha<2.
  \end{equation}
  Then a solution of the magnetic Schr\"odinger equation \eqref{eq:schromagnintr} cannot satisfy Strichartz estimates, for all the Schr\"odinger admissible couples $(p,q)\neq(\infty,2)$.
\end{theorem}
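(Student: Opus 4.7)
The plan is to exploit the rotational symmetry built into the block structure of $M$ to reduce the problem, on the zero-angular-momentum sector, to an electric Schr\"odinger equation that fits exactly the framework of the counterexample in~\cite{GVV}. First, since $M$ is anti-symmetric one has $\langle x,Mx\rangle=0$ and $\mathrm{tr}\,M=0$, so $\nabla\cdot A=0$ (Coulomb gauge) and consequently $H=-\Delta+2iA\cdot\nabla+|A|^2$. Introducing the planar angular momenta $L_j:=-i(x_{2j-1}\partial_{2j}-x_{2j}\partial_{2j-1})$ for $j=1,\dots,(n-1)/2$ and their sum $L:=L_1+\cdots+L_{(n-1)/2}$, the block structure of $M$ gives $(Mx)_{2j-1}=x_{2j}$, $(Mx)_{2j}=-x_{2j-1}$, and $(Mx)_n=0$, whence a direct check yields $iA\cdot\nabla=|x|^{-\alpha}L$. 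Therefore
\begin{equation*}
  H=-\Delta+2|x|^{-\alpha}L+|A|^2.
\end{equation*}
Because each $\sigma$-block commutes with the rotations of its plane, $M$ is equivariant under the rotations generated by the $L_j$, so $[L,H]=0$ and every eigenspace of $L$ is preserved by $e^{-itH}$.

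On $\ker L$ the evolution reduces to the electric Schr\"odinger equation
\begin{equation*}
  i\partial_tu=(-\Delta+V)u,\qquad V(x):=|A(x)|^2=|x|^{2-2\alpha}\bigl(1-x_n^2/|x|^2\bigr),
\end{equation*}
where we used $|Mx|^2=|x|^2-x_n^2$. Setting $\beta:=2\alpha-2\in(0,2)$ and $\omega(\theta):=1-\theta_n^2$ on $S^{n-1}$, one recognizes $V(x)=|x|^{-\beta}\omega(x/|x|)$, with $\omega\ge0$ homogeneous of degree zero and attaining a non-degenerate minimum at $\pm e_n$ with $\omega(\pm e_n)=0$. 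This matches precisely the hypotheses under which~\cite{GVV} constructs a family of solutions of $i\partial_tu=(-\Delta+V)u$ violating the Strichartz bounds for every admissible pair $(p,q)\neq(\infty,2)$. Since the minimum direction $e_n$ is fixed by every planar rotation generated by $L$, this concentrating family can be chosen to consist of initial data depending only on $(|x'|,x_n)$, with $x':=(x_1,\dots,x_{n-1})$; such data lie in $\ker L$, so by the reduction above the corresponding solutions of the magnetic equation~\eqref{eq:schromagnintr} agree with the GVV solutions and inherit the violation of the Strichartz bounds.

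The main non-routine point is exactly this last step: realizing the counterexample within $\ker L$. The underlying quantum harmonic-oscillator model, obtained by Taylor expanding $V$ transversally to the ray $\{re_n:r>0\}$ to second order, is rotationally symmetric in $x'$, and its ground state is a Gaussian in $x'$---hence manifestly $L$-invariant. Building the concentrating sequence from this ground state keeps the construction inside $\ker L$, which is what allows the reduction to an electric Hamiltonian to apply and the GVV counterexample to transfer to the magnetic equation. All the remaining ingredients---the gauge computation, the identification of $V$, and the matching with the hypotheses of~\cite{GVV}---are direct consequences of the block structure of $M$ and of the restriction $1<\alpha<2$, which precisely ensures $\beta\in(0,2)$.
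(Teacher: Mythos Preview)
Your approach is correct and genuinely different from the paper's. The paper builds approximate solutions directly for the magnetic equation: it fixes an eigenfunction of the model operator $T=-(\nabla-i\Omega y)^2+|y|^2$ on $\R^{n-1}$, rescales it to produce a quasi-mode $W_R$, Taylor-expands both $A$ and $|A|^2$ about the ray through $e_n$, and controls every remainder term by hand (Lemmas~\ref{lem:odd} and~\ref{lem:mainodd}), before passing from the inhomogeneous to the homogeneous estimate via $TT^*$ and Christ--Kiselev. Your route instead observes that $2iA\cdot\nabla=2|x|^{-\alpha}L$ is a multiple of the total planar angular momentum, so that on $\ker L$ (preserved because $[L,H]=0$) the Hamiltonian collapses to the electric operator $-\Delta+V$ with $V=|A|^2=|x|^{2-2\alpha}(1-x_n^2/|x|^2)$; since the transverse oscillator $-\Delta_{y}+c|y|^2$ has a Gaussian ground state, the GVV quasi-modes can be chosen radial in $x'$, hence in $\ker L$, and the GVV counterexample transfers verbatim to the magnetic flow. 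In effect your argument is what one gets by choosing the eigenfunction $v$ in the paper's construction to be the radial Gaussian: the first-order magnetic term then drops out and the computation degenerates to the electric case of~\cite{GVV}. This buys you a shorter, more conceptual proof that reuses~\cite{GVV} as a black box, while the paper's direct argument is self-contained and makes the magnetic oscillator $T$ (with its Landau-level structure) the explicit model. One detail worth tightening: the theorem in~\cite{GVV} is stated for $\langle x\rangle^{-\beta}\omega$ rather than $|x|^{-\beta}\omega$, so you should note, as in Remark~\ref{rem:generalize} here, that the quasi-mode construction is supported in the region $z\sim R\to\infty$ where the two weights agree to leading order, or else cite the form of the GVV argument that covers the homogeneous weight directly.
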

The analogous result in even dimensions is the following.
\begin{theorem}\label{thm:even}
  Let $n\geq4$ be an even number; let $\mathbf{0}$ be the null $2\times2$-block and let us consider the following anti-symmetric $n\times n$-matrix
  \begin{equation}\label{eq:Meven}
    M
    :=
    \left(
    \begin{array}{cc}
      \Omega_{n-2} & 0
      \\
      0 & \mathbf{0}
    \end{array}
    \right),
  \end{equation}
  where $\Omega_{n-2}$ is the $(n-2)\times(n-2)$-matrix defined in \eqref{eq:Omega}.
  Let $A$ be the following vector field:
  \begin{equation}\label{eq:AA}
    A(x)=|x|^{-\alpha}Mx,
    \qquad
    1<\alpha<2.
  \end{equation}
  Then a solution of the magnetic Schr\"odinger equation \eqref{eq:schromagnintr} cannot satisfy Strichartz estimates, for all the Schr\"odinger admissible couples $(p,q)\neq(\infty,2)$.
\end{theorem}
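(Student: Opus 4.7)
The plan is to construct, for any admissible pair $(p,q)\neq(\infty,2)$, a family $\{u_0^R\}_{R\gg1}\subset L^2(\R^n)$ of unit-norm initial data for which the Strichartz norms of the corresponding solutions $u^R$ of \eqref{eq:schromagnintr} blow up as $R\to\infty$. This is the even-dimensional analogue of the construction used for Theorem~\ref{thm:odd}; the only structural change is that $\ker M$ is now two-dimensional, so the ``free'' subspace of the linearized Hamiltonian is a plane rather than a line.

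First I would freeze the magnetic field at a base point $x_0\in\R^n$ with $|x_0|=R$ lying in the range of $M$. A direct computation from \eqref{eq:AA} and \eqref{eq:B} gives
\begin{equation*}
  B(x)=DA-(DA)^t=-2|x|^{-\alpha}M-\alpha|x|^{-\alpha-2}\bigl(x\otimes Mx-Mx\otimes x\bigr),
\end{equation*}
so on a ball $B(x_0,\rho)$ with $\rho\ll R$ the field is essentially $B_0:=-2R^{-\alpha}M$; replacing $A$ by its affine Taylor polynomial at $x_0$ produces the Landau-type operator $H_{\mathrm{lin}}:=-(\nabla-iA(x_0)-i(DA)(x_0)(x-x_0))^2$, which in the symmetric gauge is unitarily equivalent to a Landau Hamiltonian of cyclotron frequency $\omega\sim R^{-\alpha}$ on $L^2(\mathrm{Im}\,M)\cong L^2(\R^{n-2})$ tensored with the free Laplacian $-\Delta_z$ on $L^2(\ker M)\cong L^2(\R^2)$.

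Next I would test Strichartz against the wave packet
\begin{equation*}
  u_0^R(x):=\psi_{\mathrm{LL}}(x^{\mathrm{cyc}}-x_0^{\mathrm{cyc}})\,\phi_R(x^{\mathrm{null}}),\qquad\phi_R(z):=R^{-1}\phi(z/R),
\end{equation*}
where $\psi_{\mathrm{LL}}$ is a lowest Landau level eigenfunction for $H_{\mathrm{lin}}$ (of unit $L^2$ norm and localized on the cyclotron scale $r=R^{\alpha/2}$) and $\phi\in\mathcal{S}(\R^2)$ satisfies $\|\phi\|_{L^2(\R^2)}=1$, so that $\|u_0^R\|_{L^2(\R^n)}=1$. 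The associated linear solution factors as
\begin{equation*}
  v(t,x)=e^{-it\lambda_{\mathrm{LL}}}\psi_{\mathrm{LL}}(x^{\mathrm{cyc}}-x_0^{\mathrm{cyc}})\bigl(e^{it\Delta_z}\phi_R\bigr)(x^{\mathrm{null}}),
\end{equation*}
whose $L^q_x$ norm inherits only the two-dimensional free dispersion of $\phi_R$. Inserting the scalings $\|\psi_{\mathrm{LL}}\|_{L^q}\sim r^{-(n-2)(1/2-1/q)}$ and $\|\phi_R\|_{L^q(\R^2)}\sim R^{-(1-2/q)}$, using admissibility $\tfrac{2}{p}=n(\tfrac12-\tfrac1q)$, and integrating over the window $[0,T^\ast]$ with $T^\ast\sim R^2$ yields
\begin{equation*}
  \|v\|^p_{L^p_{[0,T^\ast]}L^q_x}\gtrsim R^{(n-2)(2-\alpha)/n},
\end{equation*}
which diverges as $R\to\infty$ because $n\geq4$ and $\alpha<2$, contradicting any universal Strichartz bound.

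The last step is to control the difference $u^R-v$. On the support of the wave packet one has $|x-x_0|\lesssim R$ over $[0,T^\ast]$, so the Taylor remainder $A(x)-A(x_0)-(DA)(x_0)(x-x_0)$ is of order $R^{1-\alpha}$, which vanishes as $R\to\infty$ since $\alpha>1$. A Duhamel argument combined with the $L^2$-unitarity of the true propagator for \eqref{eq:schromagnintr} should reduce this error to lower order than the main term above on the window $[0,T^\ast]$, which is many Landau periods ($T^\ast/R^\alpha=R^{2-\alpha}$) long. Carrying out this error analysis over such a long time interval and ensuring the blow-up persists is the main technical obstacle, and it plays here the role of the harmonic-oscillator approximation used in \cite{GVV} for the electric counterexamples.
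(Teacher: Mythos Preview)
Your physical picture—approximate Landau confinement in the $\operatorname{Im}M$ directions with free motion along $\ker M$—is correct, and so is the blow-up computation $\|v\|_{L^p_{T^\ast}L^q}^p\gtrsim R^{(n-2)(2-\alpha)/n}$. But the error step is not merely a routine detail left to be filled in: with the scales you have chosen it actually fails. You localize the null variable at scale $R$ and take $T^\ast\sim R^2$; on that support the quadratic Taylor remainder of $A$ is of size $R^{-\alpha-1}|x-x_0|^2\sim R^{1-\alpha}$, and the induced perturbation $(H-H_{\mathrm{lin}})v$ has size $R^{2-2\alpha}|v|$ from the $|A|^2$ term and $R^{1-3\alpha/2}|v|$ from the first-order term. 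If one feeds this into a Duhamel/inhomogeneous-Strichartz argument, the ratio $\|v\|_{L^p_{T^\ast}L^q}/\|(H-H_{\mathrm{lin}})v\|_{L^{p'}_{T^\ast}L^{q'}}$ scales like $R^{c(n)(\alpha-2)}$ with $c(n)>0$, hence it tends to $0$ for every $\alpha<2$. Equivalently, the $L^2$ Duhamel error over the window $[0,R^2]$ is $R^{4-2\alpha}\gg1$. So the ``main technical obstacle'' you flag is in fact an obstruction at your scales, not just a missing computation. (A minor point: placing $x_0$ in the \emph{range} of $M$ makes $A(x_0)\neq0$ and adds a spurious correction to $B(x_0)$; the clean choice, used in the paper, is $x_0\in\ker M$, so that $A(x_0)=0$ and $B(x_0)=-2R^{-\alpha}M$ exactly.)

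The paper repairs this in two ways that depart from your outline. First, it does \emph{not} freeze the coefficient at $R^{-\alpha}$ and let the null variable evolve freely. Instead the approximate solution is $W_R(t,y,z)=e^{i\lambda t/|z|^{\alpha}}v\bigl(y/|z|^{\alpha/2}\bigr)$ multiplied by \emph{static} cutoffs localizing $z$ in a box of side $R^{\gamma}$ with $\tfrac12<\gamma<1$; the $z$-dependent phase makes $W_R$ an exact solution of an operator that keeps the full $|z|^{-\alpha}$ dependence, so that the remaining Taylor error is only in passing from $|x|$ to $|z|$ on a region where $|y|\lesssim R^{\alpha/2}\ll|z|\sim R$. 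Second, rather than comparing the true flow to $W_R$, the paper observes that $W_R$ solves the genuine magnetic Schr\"odinger equation with an explicit forcing $\tilde F_R$ and initial datum $f_R$, and shows directly that both ratios $\|W_R\|_{L^p((0,R^\beta);L^q)}/\|f_R\|_{L^2}$ and $\|W_R\|_{L^p((0,R^\beta);L^q)}/\|\tilde F_R\|_{L^{p'}((0,R^\beta);L^{q'})}$ diverge, provided $\gamma$ is taken in the window $\bigl(\tfrac\alpha2,\ \tfrac\alpha2+\tfrac{(2-\alpha)n}{12}\bigr)$ and $\beta$ just above $(\alpha(n-2)+4\gamma)/n$. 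This disproves the \emph{inhomogeneous} Strichartz estimate; the homogeneous estimate then falls by a $TT^\ast$/Christ--Kiselev argument and interpolation with the $L^\infty L^2$ conservation. The delicate balance between $\gamma<1$ (needed so the $t^2|z|^{-2\alpha-2}$ error term from differentiating the phase stays under control) and $\gamma>\alpha/2$ (needed so the Strichartz norm actually blows up on the chosen time window) is precisely the content that is missing from your sketch.
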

\begin{remark}\label{rem:l2}
  The $L^\infty L^2$ estimate, which is not disproved by Theorems \ref{thm:odd}, \ref{thm:even} clearly holds as the $L^2$-conservation for solutions of \eqref{eq:schromagnintr}.
\end{remark}
\begin{remark}\label{rem:generalize}
  As will be clear by the proofs of the previous Theorems, for our counterexamples it is sufficient to consider potentials which are not singular at the origin, of the form $A=\xx^{-\alpha}Mx$, where $\xx=(1+|x|^2)^{1/2}$. Indeed, the reason for which Strichartz estimates fail is that these examples, for $1<\alpha<2$, do not decay enough at infinity. It is also possible to generalize the family of potentials producing counterexamples, by considering, instead of the above matrix $\sigma$, the following one
  \begin{equation*}
    \widetilde\sigma
  :=
  \left(
  \begin{array}{cc}
    0 & \omega\left(\frac x{|x|}\right)
    \\
    -\omega\left(\frac x{|x|}\right) & 0
  \end{array}
  \right),  
  \end{equation*}
  where $\omega$ is a scalar function, homogeneous of degree 0, and constructing $\Omega_n$, $M$ and $A$ as above.
\end{remark}
\begin{remark}\label{rem:weak}
  Following the notations in \cite{FV}, we denote by $B_\tau$ the tangential component of $B=DA-(DA)^t$, namely $B_\tau(x)=\frac{x}{|x|}B(x)$.
    It is easy to see that, in the cases \eqref{eq:A}, \eqref{eq:AA}, we have $B_\tau=0$ (see also Examples 1.6 and 1.7 in \cite{FV}). Hence, by Theorems 1.9 and 1.10 in \cite{FV}, weak-dispersive estimates, including local smoothing, hold for equation \eqref{eq:schromagnintr}, independently of the decay rate $\alpha$. In fact, these are relevant examples for which weak dispersion holds, but not Strichartz estimates.
\end{remark}
\begin{remark}\label{rem:alfa}
  The potentials in \eqref{eq:A}, \eqref{eq:AA} have the decay $|A|\sim|x|^{1-\alpha}$ and the statements are given in the range $\alpha\in(1,2)$. In the case $\alpha<1$, since the potential $A$ does not decay at infinity, the spectrum of $H$ contains eigenvalues; hence, due to the presence of $L^2$-eigenfunctions for $H$, global dispersive estimates obviously fail.
  It remains opened the question for $\alpha=2$, for which no results are known, neither of positive or negative type.
\end{remark}
\begin{remark}\label{rem:dimension}
  The counterexamples in the previous Theorems cover the dimensions $n\geq3$. The dimension $n=1$ is not meaningful for magnetic potentials, because by gauge transformations it is always possible to pass from \eqref{eq:hamiltonianintr} to the free Hamiltonian $H_0=-\partial^2/\partial x^2$. On the other hand, the problem of Strichartz estimates for \eqref{eq:schromagnintr}, in dimension $n=2$, remains completely open.
\end{remark}
The idea of the proof of Theorems \ref{thm:odd}, \ref{thm:even} is analogous to the one in \cite{GVV}. By homogeneity and the algebraic form of $A$ in \eqref{eq:A}, \eqref{eq:AA}, it is possible to approximate the Hamiltonian $H$ in \eqref{eq:hamiltonianintr} with the dimensionless operator
\begin{equation}\label{eq:LL}
  T:=-(\nabla-i\Omega y)^2+|y|^2,
\end{equation}
where $\Omega=\Omega_{n-1}$, if $n$ is odd, and $\Omega=\Omega_{n-2}$ if $n$ is even. We recall that the operator
\begin{equation*}
  T_0:=-(\nabla-i\Omega y)^2
\end{equation*}
has compact resolvent and its spectrum is purely discrete. Actually, it is the analogous of the harmonic oscillator, in the setting of magnetic fields, and its eigenvalues define the levels of energy which are usually referred to as the {\it Landau levels} (see e.g. \cite{CFKS}). The operator $T$ in \eqref{eq:LL} has the same spectral properties of $T_0$, since the quadratic form $|y|^2$ is positively defined (see e.g. \cite{M}).

The rest of the paper is organized as follows. In Sections \ref{sec:odd}, \ref{sec:even}, starting from an eigenfunction of $T$ we construct and estimate some approximated solutions to equation \eqref{eq:schromagnintr} (see Lemmas \ref{lem:odd}, \ref{lem:even}). The proof of Theorems \ref{thm:odd}, \ref{thm:even} are performed in Sections \ref{sec:proofodd}, \ref{sec:proofeven}, respectively.

\section{Preliminaries for the odd-dimensional case}\label{sec:odd}

Throughout this Section, $n$ will be an odd number, $n\geq3$.

On the even-dimensional space $\R^{n-1}$, let us consider again the operator $T$ defined in \eqref{eq:LL}. Observe that the explicit expansion of $T$ is the following:
\begin{equation}\label{eq:epo1}
  T:=-\Delta + 2i (y_{2}, -y_{1}, \dots ,y_{n-1}, -y_{n-2}) \cdot \nabla + 2|y|^2,
\end{equation}
for $y\in\R^{n-1}$.
Since the quadratic form $|y|^2$ is positively defined and $\Omega$ is anti-symmetric, $T$ has compact resolvent and in particular its spectrum reduces to a discrete set of eigenvalues (see e.g. \cite{M}). Moreover, any eigenfunction $v(y)$ solving
\begin{equation}\label{eq:epo2}
Tv(y) = \lambda v(y),
\end{equation}
for some eigenvalue $\lambda\in\R$, is such that
\begin{equation}\label{eq:lp}
  v(y) \in \bigcap _{p=1}^{\infty} L^{p}(\R^{n-1}),
  \qquad
  \nabla v \in \bigcap _{p=1}^{\infty} L^{p}(\R^{n-1}).
\end{equation}
Let us fix an eigenvalue $\lambda\in\R$ and a corresponding eigenfunction $v$; by scaling $v$ we create a function $\omega:\R^{n-1}\times(0,+\infty)$ in the following way:
\begin{equation}\label{eq:omega1}
\omega(x) := v\left(\frac{y}{\sqrt{z^{\alpha}}}\right),
\qquad
x:=(y,z) \in \mathbb{R}^{n-1} \times (0, \infty),
\end{equation}
where $\alpha$ is the exponent in \eqref{eq:A}.
By a direct computation, we see that $\omega$ satisfies
\begin{equation}\label{eq:epo4}
 -\left(\nabla-\frac{i}{z^{\alpha}}M(y,0)^t\right)^2\omega +  \frac{1}{z^{2\alpha}}|y|^2\omega = \frac{\lambda}{z^{\alpha}} \omega,
\end{equation}
where $M$ is defined via \eqref{eq:Modd}.
Let us now introduce the time-dependent function
\begin{equation}\label{eq:epo5}
W(t,y,z) = e^{i( \lambda t /z^{\alpha})}\omega(y,z),
\qquad
(t,y,z) \in \mathbb{R} \times \mathbb{R}^{n-1} \times (0, \infty).
\end{equation}
By direct computations, it turns out that $W$ solves
\begin{equation}\label{eq:epo10}
i\partial_{t}W - \left(\nabla - \frac{i}{z^{\alpha}}M(y,0)^t\right)^2 W
+ \frac{1}{z^{2\alpha}}|y|^2W = F,
\end{equation}
where
\begin{align}\label{eq:F}
F(t,y,z)
= & \frac{e^{i( \lambda t /z^{\alpha})}}{z^{2}} \left\{\left[
\frac{\alpha^{2}\lambda^{2}t^{2}}{z^{2\alpha}}-\frac{\alpha(\alpha + 1)i \lambda t}{z^{\alpha}}\right]
v\left(\frac{y}{\sqrt{z^{\alpha}}} \right)
\right.
\\
& \left. - G\left(\frac{y}{\sqrt{z^{\alpha}}}\right)\cdot\left[\frac{\alpha^{2}i \lambda t}{z^{\alpha}} + \frac{\alpha(\alpha + 2)}{4}\right] - \frac{\alpha^{2}}{4} H\left(\frac{y}{\sqrt{z^{\alpha}}}\right)\right\},
\nonumber
\end{align}
with
\begin{equation}\label{eq:epo7}
G(y) = y \cdot \nabla_{y}v(y),
\qquad
H(y) = y D^{2}_{y}v(y)\cdot y.
\end{equation}
We now introduce a real-valued cutoff function $\psi \in C^{\infty}_0(\R)$ with the following properties:
\begin{equation}\label{eq:psi}
\psi(z) = 0\quad \text{for }|z| > 1,
\qquad
\psi(z) = 1\quad \text{for }|z| < 1/2.
\end{equation}
Let us fix a parameter $\gamma \in (1/2,1)$, and for any $R>0$ let us denote by
\begin{equation}\label{eq:psiR}
  \psi_{R}(z) := \psi\left(\frac{z - R}{R^{\gamma}}\right).
\end{equation}
With these notations, we truncate $W$ as follows:
\begin{equation}\label{eq:WR}
 W_{R}(t,y,z) := W(t,y,z)\psi_R(z)\psi\left(\frac{|y|^{2}}{z^{2}}\right).
\end{equation}
Again, a direct computation shows that $W_R$ solves the Cauchy problem
\begin{equation}\label{eq:epo17}
  \begin{cases}
    i\partial_tW_R-(\nabla-\frac{i}{z^\alpha}M(y,0)^t)^2W_{R}
    +\frac{|y|^2}{z^{2\alpha}}W_R
    =
    F_R
    \\
    W_{R}(0,y,z) = f_{R}(y,z).
  \end{cases}
\end{equation}
Here the initial datum is given by
\begin{equation}\label{eq:epo18}
f_{R}(y,z) = \psi_{R}(z)\psi\left(\frac{|y|^{2}}{z^{2}}\right)\omega(y,z),
\end{equation}
and
\begin{equation}\label{eq:epo19}
F_{R}(t,y,z) =  \psi_{R}\psi F + G_{R},
\end{equation}
where $F$ is given by \eqref{eq:F} and
$G_R$ has the form
\begin{align}\label{eq:G}
G_{R}(t,y,z) = &e^{i( \lambda t /z^{\alpha})}\left\{-\omega\left[\frac{2(n - 1)}{z^{2}}\psi_{R}\psi' + \frac{4|y|^{2}}{z^{4}}\psi_{R}\psi'' + \psi''_{R}\psi - \frac{4|y|^{2}}{z^{3}}\psi'_{R}\psi'
\right.\right.
\\
&\left.+ \frac{4|y|^{4}}{z^{6}}\psi_{R}\psi'' + \frac{6|y|^{2}}{z^{4}}\psi_{R}\psi' - \frac{2\alpha i\lambda t}{z^{\alpha+1}}\psi'_{R}\psi + \frac{4\alpha i\lambda t|y|^{2}}{z^{\alpha+4}}\psi_{R}\psi'\right]
\nonumber
\\
& \left.- G\left(\frac{y}{\sqrt{z^\alpha}}\right)\cdot\left[ \frac{4}{z^{2}}\psi_{R}\psi' - \frac{\alpha}{z}\psi^{'}_{R}\psi + \frac{2\alpha |y|^{2}}{z^{4}}\psi_{R}\psi'\right]\right\},
\nonumber
\end{align}
and $G$ is defined by \eqref{eq:epo7}.

The main result of this Section is the following.
\begin{lemma}\label{lem:odd}
Let $n\geq 3$ be an odd number, $p,q\in(1,\infty)$, and $\gamma\in(1/2,1)$; then we have
\begin{equation}\label{eq:epo21}
 \|f_{R}\|_{L^{2}_{x}} \leq C R^{(\alpha(n-1)+2\gamma)/4},
\end{equation}
\begin{equation}\label{eq:epo22}
 \|W_{R}\|_{L^{p}_{T}L^{q}_{x}} \geq C T^{1/p} R^{(\alpha(n-1)+2\gamma)/2q},
\end{equation}
\begin{equation}\label{eq:epo23}
 \|F_{R}\|_{L^{p}_{T}L^{q}_{x}} \leq C T^{1/p} R^{(\alpha(n-1)+2\gamma)/2q}\max\{R^{-2\gamma}, T^{2}R^{-(2\alpha+2)}\},
\end{equation}
for all $R > 2, T > 0$, and some constant
where $C = C(q,\gamma) > 0$. In particular, if $(p,q)\neq(\infty,2)$ is a Schr\"odinger admissible couple
and $\beta > 0$, then the following estimates hold
\begin{equation}\label{eq:epo25}
\frac{\|W_{R}\|_{L^{p}((0,R^{\beta}); L^{q}_{x})}}{\|f_{R}\|_{L^{2}_{x}}} \geq CR^{(\beta n - (\alpha(n-1)+2\gamma))/np},
\end{equation}
\begin{equation}\label{eq:epo26}
\frac{\|W_{R}\|_{L^{p}((0,R^{\beta}); L^{q}_{x})}}{\|F_{R}\|_{L^{p'}((0,R^{\beta}); L^{q'}_{x})}} \geq CR^{\kappa},
\end{equation}
for any $R>2$,
where
\begin{equation*}
\kappa = \kappa(n, \gamma, \beta, p) = 2\left(\frac{\beta n - (\alpha(n-1)+2\gamma)}{np}\right) +\min\{2\gamma - \beta, 2\alpha + 2 - 3\beta\}
\end{equation*}
and the constants $C > 0$ do not depend on $R$.
\end{lemma}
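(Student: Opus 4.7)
My plan is to reduce all five estimates to a single anisotropic rescaling and then to combine the resulting norm computations with the admissibility relation $2/p=n/2-n/q$. The key change of variable is $\eta=y/z^{\alpha/2}$, which has Jacobian $z^{\alpha(n-1)/2}$ and converts $v(y/\sqrt{z^\alpha})$, $G(y/\sqrt{z^\alpha})$ and $H(y/\sqrt{z^\alpha})$ into fixed $L^r$-profiles on $\R^{n-1}$; membership of $v$, $\nabla v$ and $D^2 v$ in every $L^r$, stated in \eqref{eq:lp}, is essential here. On the support of $\psi_R(z)$ one has $z\sim R$ and $z$ varies in an interval of length $\sim R^\gamma$. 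Moreover the rescaled profiles are concentrated where $|y|\lesssim z^{\alpha/2}$, hence where $|y|^2/z^2\lesssim R^{\alpha-2}\to 0$ as $R\to\infty$, so for $R$ large the outer cutoff $\psi(|y|^2/z^2)$ equals $1$ on the effective support of each profile and plays no role at leading order.

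With these observations, \eqref{eq:epo21} and \eqref{eq:epo22} reduce to computing $\int \psi_R(z)^2 z^{\alpha(n-1)/2}\,dz\sim R^{\alpha(n-1)/2+\gamma}$, respectively its $L^{q/2}$-variant, together with the identity $|e^{i\lambda t/z^\alpha}|=1$ that supplies the $T^{1/p}$ factor in \eqref{eq:epo22}. The lower bound in \eqref{eq:epo22} is obtained by further restricting to a subregion of $(y,z)$-space where $v$, $\psi_R$ and $\psi$ are all uniformly bounded below.

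The bound \eqref{eq:epo23} is the most delicate step: each summand in \eqref{eq:F} and \eqref{eq:G} must be estimated separately. The summands from $\psi_R\psi F$ carry moduli of the form $(\lambda t)^k/z^{2+k\alpha}$ with $k\in\{0,1,2\}$ multiplied by $|v|$, $|G|$ or $|H|$; after the $L^p_T$ integration their prefactors are $T^{k+1/p}/R^{2+k\alpha}$, and the elementary inequality $\max\{a,b\}\geq\sqrt{ab}$ together with $\gamma<1$ shows that each is bounded by $T^{1/p}\max\{R^{-2\gamma},T^2 R^{-2\alpha-2}\}$. The summands from $G_R$ split into those containing $\psi_R'$ or $\psi_R''$, which pick up prefactors $R^{-\gamma}$ or $R^{-2\gamma}$ (the worst being $\psi_R''\psi\,\omega$, contributing exactly $R^{-2\gamma}$), and those containing a derivative of the radial cutoff $\psi(|y|^2/z^2)$, which are supported in $|y|^2/z^2\sim 1$, outside the effective support of $v$ and $\nabla v$, and are therefore $O(R^{-N})$ for every $N$ by the rapid decay. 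Factoring out the common space-time scale $T^{1/p}R^{(\alpha(n-1)+2\gamma)/(2q)}$ yields \eqref{eq:epo23}.

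Finally, \eqref{eq:epo25} and \eqref{eq:epo26} are direct algebraic consequences of \eqref{eq:epo21}--\eqref{eq:epo23} after substituting $T=R^\beta$ and using admissibility in the form $1/q-1/2=-2/(np)$, which converts the exponent $(\alpha(n-1)+2\gamma)(1/(2q)-1/(2q'))$ coming from $L^q$--$L^{q'}$ duality into $-2(\alpha(n-1)+2\gamma)/(np)$; the identities $-\beta+2\gamma=2\gamma-\beta$ and $-\beta+(2\alpha+2-2\beta)=2\alpha+2-3\beta$ show that the $\min$ defining $\kappa$ is exactly the dual of the $\max$ appearing in \eqref{eq:epo23}. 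The main obstacle is the term-by-term bookkeeping for \eqref{eq:epo23}: one must match spatial weights and keep uniform constants across all summands, so that, when \eqref{eq:epo22} and \eqref{eq:epo23} are combined in \eqref{eq:epo26}, no factor of $R^\gamma$ is lost in the cancellations.
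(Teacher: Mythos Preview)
Your overall strategy---the anisotropic rescaling $\eta=y/z^{\alpha/2}$, the localization $z\sim R$ on an interval of length $R^\gamma$, term-by-term estimation of $F_R$, and the final substitution $T=R^\beta$ combined with admissibility---matches the paper's proof closely, and your treatment of $\psi_R\psi F$ via the geometric-mean argument is exactly what the paper does.

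There is, however, a gap in your treatment of $G_R$. You dispose of every term containing a derivative of the outer cutoff $\psi(|y|^2/z^2)$ by appealing to ``rapid decay'' of $v$, arguing that on $\{|y|\sim z\}$ the rescaled argument $y/\sqrt{z^\alpha}$ has size $\sim R^{1-\alpha/2}\to\infty$, so that $v$ and $\nabla v$ are $O(R^{-N})$ there. But the only information recorded in \eqref{eq:lp} is $v,\nabla v\in\bigcap_p L^p$, which does \emph{not} imply pointwise rapid decay (e.g.\ $(1+|y|)^{-n}$ lies in every $L^p(\R^{n-1})$ yet decays only polynomially). The paper avoids this issue entirely: it bounds $\|\Lambda\psi_R\psi'\|_{L^q}$, $\|\Lambda\psi_R\psi''\|_{L^q}$ by the \emph{same} quantity $CR^{(\alpha(n-1)+2\gamma)/2q}$ as $\|\Lambda\psi_R\psi\|_{L^q}$, using only $L\in L^q$, and extracts the required smallness from the accompanying weights $|y|^2/z^4$, $|y|^4/z^6$, $1/z^2$, which on the support contribute $R^{-(4-\alpha)}$, $R^{-(6-2\alpha)}$, $R^{-2}$; since $\gamma<1$ and $\alpha<2$ these are all $\leq R^{-2\gamma}$. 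Your shortcut happens to be correct for the specific operator $T$ (its eigenfunctions are Schwartz), but as written it invokes a property you have not justified from the stated hypotheses.

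A smaller point: among the $\psi_R'$ terms in \eqref{eq:G} there is the time-dependent one $-\tfrac{2\alpha i\lambda t}{z^{\alpha+1}}\psi_R'\psi\,\omega$, whose relative prefactor is $TR^{-(\alpha+1+\gamma)}$, not merely $R^{-\gamma}$. This is not dominated by $R^{-2\gamma}$ for large $T$; it is, however, exactly the geometric mean $\sqrt{R^{-2\gamma}\cdot T^2R^{-(2\alpha+2)}}$ and hence is absorbed by the $\max$ via the same inequality you already used for $F$. You should make this step explicit rather than declaring $\psi_R''\psi\,\omega$ to be ``the worst''.
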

\begin{proof}
For a given function $L(y)$, we denote by
\begin{equation}\label{eq:epo28}
\Lambda(y,z) := L\left(\frac{y}{\sqrt{z^{\alpha}}}\right),
\end{equation}
for any $(y,z) \in \mathbb{R}^{n-1} \times \mathbb{R}.$
In order to prove \eqref{eq:epo21} and \eqref{eq:epo22}, it is sufficient to show that, if $0\neq L\in \bigcap _{p=1}^{\infty} L^{p}(\mathbb{R}^{n-1})$, then the following estimates hold:
\begin{equation}\label{eq:epo29}
cR^{(\alpha(n-1)+2\gamma)/2q} \leq  \|\Lambda\psi_{R}\psi\|_{L^{q}_{x}} \leq CR^{(\alpha(n-1)+2\gamma)/2q},
\end{equation}
for all $R>1$,
where $c = c(q,L) > 0$ and $C = C(q,L) > 0$ and $\psi$ and $\psi_{R}$ are defined by \eqref{eq:psi}, \eqref{eq:psiR}. Indeed, \eqref{eq:epo21} and \eqref{eq:epo22} follow by \eqref{eq:epo29}, with the choice $L(y)=v(y)$. In order to prove \eqref{eq:epo29}, observe that, by the properties of $\psi_{R}$ and $\psi$, we have
\begin{align*}
& \int_{R-R^{\gamma}/2}^{R+R^{\gamma}/2}dz\int_{|y| < \frac{\sqrt{z^{2-\alpha}}}{\sqrt{2}}}|L(y)|^{q}z^{(n-1)\alpha/2}dy
\\
&\ \ \
= \int_{R-R^{\gamma}/2}^{R+R^{\gamma}/2}dz\int_{|y| < \frac{z}{\sqrt{2}}}|\Lambda|^{q}dy
\leq \int_{\mathbb{R}^{n}}|\Lambda\psi_{R}\psi|^{q}dydz
\leq \int_{R-R^{\gamma}}^{R+R^{\gamma}}dz\int_{|y| < z}|\Lambda|^{q}dy
\\
&\ \ \
= \int_{R-R^{\gamma}}^{R+R^{\gamma}}dz\int_{|y| < \sqrt{z^{2-\alpha}}}|L(y)|^{q}z^{(n-1)\alpha/2}dy;
\end{align*}
this implies (\ref{eq:epo29}).
With the same argument as above, we can prove the following estimates:
\begin{align*}
&cR^{(\alpha(n-1)+2\gamma)/2q} \leq  \|\Lambda\psi_{R}\psi^{'}\|_{L^{q}_{x}} \leq CR^{(\alpha(n-1)+2\gamma)/2q}
\\
&cR^{(\alpha(n-1)+2\gamma)/2q - \gamma} \leq  \|\Lambda\psi^{'}_{R}\psi\|_{L^{q}_{x}} \leq CR^{(\alpha(n-1)+2\gamma)/2q - \gamma}
\\
&cR^{(\alpha(n-1)+2\gamma)/2q - \gamma} \leq  \|\Lambda\psi^{'}_{R}\psi^{'}\|_{L^{q}_{x}} \leq CR^{(\alpha(n-1)+2\gamma)/2q - \gamma}
\\
&cR^{(\alpha(n-1)+2\gamma)/2q} \leq  \|\Lambda\psi_{R}\psi^{''}\|_{L^{q}_{x}} \leq CR^{(\alpha(n-1)+2\gamma)/2q}
\\
&cR^{(\alpha(n-1)+2\gamma)/2q - 2\gamma} \leq  \|\Lambda\psi^{''}_{R}\psi\|_{L^{q}_{x}} \leq CR^{(\alpha(n-1)+2\gamma)/2q - 2\gamma}
\end{align*}
Now we can pass to the
proof of \eqref{eq:epo23}. It is sufficient to use the estimates
\begin{equation}\label{eq:epo32}
\|\psi_{R}\psi F\|_{L^{p}_{T}L^{q}_{x}} \leq C T^{1/p} R^{(\alpha(n-1)+2\gamma)/2q}\max\{R^{-2}, T^{2}R^{-(2\alpha+2)}\},
\end{equation}
\begin{equation}\label{eq:epo33}
\|G_{R}\|_{L^{p}_{T}L^{q}_{x}} \leq C T^{1/p} R^{(\alpha(n-1)+2\gamma)/2q}\max\{R^{-2\gamma}, R^{-(6-2\alpha)}, TR^{-(\alpha+1+\gamma)}, TR^{-4}\},
\end{equation}
 which we are going to prove under the conditions $1/2 < \gamma < 1$ and $1 < \alpha < 2$. Looking at the structure of F, it is easy to see that, in order to deduce \eqref{eq:epo32}, it is sufficient to estimate the norms of functions of the following type:
\begin{equation}
\frac{t}{z^{\alpha+2}} \Lambda\psi_{R}\psi,
\qquad
\frac{t^{2}}{z^{2\alpha+2}} \Lambda\psi_{R}\psi,
\qquad
\frac{1}{z^{2}} \Lambda\psi_{R}\psi,
\end{equation}
where $\Lambda$ is defined by \eqref{eq:epo28} and L may change in different expressions. Let us consider the first term. Due to the properties of $\psi_{R}$, on the support of $\frac{t}{z^{\alpha+2}} \Lambda\psi_{R}\psi$ we have that $t/z^{\alpha+2} \leq Ct/R^{\alpha+2}$; hence we obtain
\begin{equation}\label{eq:epo35}
\|\frac{t}{z^{\alpha+2}} \Lambda\psi_{R}\psi\|_{L^{p}_{T}L^{q}_{x}} \leq CT^{1+1/p}\frac{1}{R^{\alpha+2}}\|\Lambda\psi_{R}\psi\|_{L^{q}_{x}} \leq  CT^{1+1/p}R^{(\alpha(n-1)+2\gamma)/2q-(\alpha+2)}
\end{equation}
where we have used \eqref{eq:epo29} in the last estimate. With the same arguments, we can deduce that
\begin{align*}
&\|\frac{t^{2}}{z^{2\alpha+2}} \Lambda\psi_{R}\psi\|_{L^{p}_{T}L^{q}_{x}} \leq CT^{2+1/p}R^{(\alpha(n-1)+2\gamma)/2q-(2\alpha+2)}
\\
&\|\frac{1}{z^{2}} \Lambda\psi_{R}\psi\|_{L^{p}_{T}L^{q}_{x}} \leq CT^{1/p}R^{(\alpha(n-1)+2\gamma)/2q - 2}.
\end{align*}
Notice that the norm estimate in \eqref{eq:epo35} is the geometric mean of the estimates above, so it cannot be the largest one; this remark proves  \eqref{eq:epo32}.

Analogously, looking at the structure of $G_{R}$ it is easy to see that the following estimates imply \eqref{eq:epo33}:
\begin{align*}
&\|\frac{1}{z^{2}} \Lambda\psi_{R}\psi^{'}\|_{L^{p}_{T}L^{q}_{x}} \leq CT^{1/p}R^{(\alpha(n-1)+2\gamma)/2q - 2},\\
&\|\frac{|y|^{2}}{z^{4}} \Lambda\psi_{R}\psi^{''}\|_{L^{p}_{T}L^{q}_{x}} = \|\frac{1}{z^{4-\alpha}}\Theta\psi_{R}\psi^{''}\|_{L^{p}_{T}L^{q}_{x}} \leq CT^{1/p}R^{(\alpha(n-1)+2\gamma)/2q - (4-\alpha)},\\
&\|\Lambda\psi^{''}_{R}\psi\|_{L^{p}_{T}L^{q}_{x}} \leq CT^{1/p}R^{(\alpha(n-1)+2\gamma)/2q - 2\gamma},\\
&\|\frac{|y|^{2}}{z^{3}}\Lambda\psi^{'}_{R}\psi^{'}\|_{L^{p}_{T}L^{q}_{x}} \leq CT^{1/p}R^{(\alpha(n-1)+2\gamma)/2q - (3-\alpha+\gamma)},\\
&\|\frac{|y|^{4}}{z^{6}} \Lambda\psi_{R}\psi^{''}\|_{L^{p}_{T}L^{q}_{x}} = \|\frac{1}{z^{6-2\alpha}}\Psi\psi_{R}\psi^{''}\|_{L^{p}_{T}L^{q}_{x}} \leq CT^{1/p}R^{(\alpha(n-1)+2\gamma)/2q - (6-2\alpha)},\\
&\|\frac{|y|^{2}}{z^{4}} \Lambda\psi_{R}\psi^{'}\|_{L^{p}_{T}L^{q}_{x}} = \|\frac{1}{z^{4-\alpha}}\Theta\psi_{R}\psi^{'}\|_{L^{p}_{T}L^{q}_{x}} \leq CT^{1/p}R^{(\alpha(n-1)+2\gamma)/2q - (4-\alpha)},\\
&\|\frac{t}{z^{\alpha+1}}\Lambda\psi^{'}_{R}\psi\|_{L^{p}_{T}L^{q}_{x}} \leq CT^{1+1/p}R^{(\alpha(n-1)+2\gamma)/2q - (\alpha+1+\gamma)},\\
&\|\frac{t|y|^{2}}{z^{\alpha+4}} \Lambda\psi_{R}\psi^{'}\|_{L^{p}_{T}L^{q}_{x}} = \|\frac{t}{z^{4}}\Theta\psi_{R}\psi^{'}\|_{L^{p}_{T}L^{q}_{x}} \leq CT^{1+1/p}R^{(\alpha(n-1)+2\gamma)/2q - 4},\\
&\|\frac{1}{z}\Lambda\psi^{'}_{R}\psi\|_{L^{p}_{T}L^{q}_{x}} \leq CT^{1/p}R^{(\alpha(n-1)+2\gamma)/2q - (1+\gamma)}.
\end{align*}
Here we denoted by
\begin{equation}\label{eq:tetapsi}
\Theta(y,z) := \left(\frac{|y|}{\sqrt{z^{\alpha}}}\right)^{2} L\left(\frac{y}{\sqrt{z^{\alpha}}}\right),
\quad
\Psi(y,z) := \left(\frac{|y|}{\sqrt{z^{\alpha}}}\right)^{4} L\left(\frac{y}{\sqrt{z^{\alpha}}}\right),
\end{equation}
for $(y,z) \in \mathbb{R}^{n-1} \times \mathbb{R}.$
In order to conclude the proof of the Lemma, it is now sufficient to remark that estimates
\eqref{eq:epo25} and \eqref{eq:epo26} follow from \eqref{eq:epo21}, \eqref{eq:epo22}, and \eqref{eq:epo23}, where we choose $T = R^{\beta}$.
\end{proof}

\section{Proof of Theorem \ref{thm:odd}}\label{sec:proofodd}
We can now prove Theorem \ref{thm:odd}. For any $x\in\R^n$, we denote by $x=(y,z)$, where $y=(y_1,\dots,y_{n-1})\in\R^{n-1}$, $z\in\R$. Since $M$ is anti-symmetric, we have that $\text{div}A\equiv0$ and the explicit expansion of $H$ in \eqref{eq:hamiltonianintr} is given by
\begin{equation}\label{eq:Hexpl}
  H=-\Delta+2iA\cdot\nabla+|A|^2.
\end{equation}
By homogeneity we have
\begin{equation}\label{eq:omog}
A(y,z) \cdot \nabla u =z^{1-\alpha}A\left(\frac{y}{z}, 1\right) \cdot \nabla u,
\end{equation}
for all $(y,z) \in \mathbb{R}^{n-1} \times (0, \infty)$. Let $P = (0,...,0,1)$;
since $A(P)=0$ and $DA(P)=M$, doing the first-order Taylor expansion of $A$ around $P$ in \eqref{eq:omog} we get
\begin{align}\label{eq:epo39}
2iA(y,z)\cdot\nabla u
& =
2iz^{1-\alpha}\left\{M\left(\frac{y}{z}, 0\right)^{t}
+
R_{1}\left(\frac{y}{z}\right)\right\}\cdot\nabla u
\\
& = 2iz^{-\alpha}M\left(y,0\right)^{t}\cdot\nabla u
+
2iz^{1-\alpha}R_{1}\left(\frac{y}{z}\right)\cdot\nabla u,
\nonumber
\end{align}
where the rest $R_1$ satisfies
\begin{equation}\label{eq:epo42}
\left|R_{1}\left(\frac{y}{z}\right)\right| \leq C\frac{|y|^{2}}{z^{2}},
\end{equation}
for all $(y,z)\in\R^{n-1}\times(0,+\infty)$ such that $|y|<|z|$.
Analogously, for $|A|^2$ we have
\begin{equation}\label{eq:omog2}
|A|^{2}(y,z) = z^{2-2\alpha}|A|^{2}\left(\frac{y}{z}, 1\right).
\end{equation}
Notice that $|A|^2(P)=0=\nabla(|A|^2)(P)$, and moreover
\begin{equation*}
  D^2(|A|^2)(P)=2M^tM
  =
  2\left(
  \begin{array}{cccc}
    I_{n-1} & 0
    \\
    0 & 0
  \end{array}
  \right),
\end{equation*}
where $I_{n-1}$ denote the identity $(n-1)\times(n-1)$-matrix.
Hence, doing the second-order Taylor expansion of $|A|^2$ around $P$ in \eqref{eq:omog2}, we obtain
\begin{equation}\label{eq:epo50}
  |A|^2(y,z)=\frac{2}{z^{2\alpha}}|y|^2 + z^{2-2\alpha}R_{2}\left(\frac{y}{z}\right),
\end{equation}
where the rest $R_2$ satisfies
\begin{equation}\label{eq:epo51}
|R_{2}\left(\frac{y}{z}\right)| \leq C\frac{|y|^{3}}{z^{3}},
\end{equation}
provided $|y| < |z|$.

We can now select a couple $(\lambda, v(y))$ which satisfies the eigenvalue problem \eqref{eq:epo2}; hence from now on the functions $W_{R}$, $f_{R}$, and $F_{R}$ are fixed by \eqref{eq:WR}, \eqref{eq:epo18} and \eqref{eq:epo19}.

Due to \eqref{eq:Hexpl}, \eqref{eq:epo39}, \eqref{eq:epo50}, the following auxiliar equation is naturally related to our Cauchy problem \eqref{eq:schromagnintr}
\begin{align}\label{eq:epo69}
& i\partial_{t}u_{R}-\left(\nabla- \frac{i}{z^{\alpha}} M(y,0)^t\right)^2 u_R
\\
&\ \ \ + \frac{1}{z^{2\alpha}}|y|^2u_R
+ 2i z^{1-\alpha}R_{1}\left(\frac{y}{z}\right) \nabla u_R + z^{2-2 \alpha}R_{2}\left(\frac{y}{z}\right)u_{R} =\tilde{F}_{R},
\nonumber
\end{align}
with initial datum
\begin{equation}\label{eq:epo69datum}
  u_{R}(0, y, z) = f_{R},
\end{equation}
where
\begin{equation}\label{eq:epo70}
 \tilde{F}_{R}(t,y,z) = \chi_{(0, R^{\beta})}(t)\left\{F_{R} + 2i z^{1-\alpha}R_{1}\left(\frac{y}{z}\right)\nabla W_{R} + z^{2-2 \alpha}R_{2}\left(\frac{y}{z}\right)W_{R}\right\},
\end{equation}
$\beta$ is the same as in Lemma \ref{lem:odd}, and $f_{R}$, $F_{R}$ are given by \eqref{eq:epo18}, \eqref{eq:epo19}. Notice that, due to \eqref{eq:epo17}, \eqref{eq:epo70}, the solution $u_R$ of the Cauchy problem \eqref{eq:epo69}-\eqref{eq:epo69datum} coincides with the solution $W_R$ of \eqref{eq:epo17} for small times $t\in(0,R^\beta)$. We prove the following crucial Lemma.
\begin{lemma}\label{lem:mainodd}
  Let $(p,q)$ be a Schr\"odinger admissible couple, $(p,q)\neq(\infty,2)$, and let $(p',q')$ be the dual couple. The following estimate holds:
  \begin{equation}\label{eq:epo71}
    \frac{\|W_{R}\|_{L^{p}((0,R^{\beta});L^{q}_{x})}}{\|\tilde{F}_{R}\|_{L^{p'}((0,R^{\beta});L^{q'}_{x})}} \geq CR^{\delta}, \qquad \forall 1/2 < \gamma < 1,
  \end{equation}
  where
  \begin{align}\label{eq:epo72}
     & \delta  =\delta(n, \alpha, \gamma, \beta, p) = 2\left(\frac{\beta n - (\alpha(n-1)+2\gamma)}{np}\right)\\
     &+ \min\left\{2\gamma - \beta, 2\alpha + 2 -3\beta, \frac{\alpha}{2} + 1 - \beta, 3 - \frac{\alpha}{2} - \beta, \gamma + 1 - \beta, \alpha + 2 - 2\beta\right\},
     \nonumber
  \end{align}
  and $\gamma$ is the same as in \eqref{eq:psiR}.
\end{lemma}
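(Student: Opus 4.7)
The plan is to bound $\|\tilde F_R\|_{L^{p'}((0,R^\beta);L^{q'}_x)}$ from above by applying the triangle inequality to the three summands of $\tilde F_R$ in \eqref{eq:epo70}, and then taking the ratio with the lower bound \eqref{eq:epo22} on $\|W_R\|_{L^p((0,R^\beta);L^q_x)}$. The first summand, $F_R$, is already controlled by estimate \eqref{eq:epo26} of Lemma \ref{lem:odd}; it contributes the two candidates $2\gamma-\beta$ and $2\alpha+2-3\beta$ to the minimum in \eqref{eq:epo72} through the exponent $\kappa$.

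For the two Taylor-remainder summands I would first use the pointwise bounds \eqref{eq:epo42} and \eqref{eq:epo51}, which apply on the support of $W_R$ since the angular cutoff $\psi(|y|^2/z^2)$ enforces $|y|<z$. This reduces the problem to estimating $z^{-1-\alpha}|y|^2|\nabla W_R|$ and $z^{-1-2\alpha}|y|^3|W_R|$ in $L^{p'}((0,R^\beta);L^{q'}_x)$. I would then expand $\nabla W_R$ via the product rule applied to \eqref{eq:WR} and \eqref{eq:epo5}: the derivative falls in turn on the phase $e^{i\lambda t/z^\alpha}$, on the profile $v(y/\sqrt{z^\alpha})$, on $\psi_R(z)$, or on $\psi(|y|^2/z^2)$. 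Each resulting summand is estimated by performing the change of variable $\xi=y/\sqrt{z^\alpha}$ from \eqref{eq:tetapsi} together with \eqref{eq:lp}, exactly as in the proof of Lemma \ref{lem:odd}; the outcome is a bound of the form $CT^{k+1/p'}R^{a_i+((n-1)\alpha+2\gamma)/(2q')}$, where $k\in\{0,1\}$ counts the power of $t$ and $a_i$ is the $z$-exponent attached to that specific piece.

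Taking the ratio with \eqref{eq:epo22}, setting $T=R^\beta$, and using the admissibility identity $2/p=n(1/2-1/q)$, each summand contributes a correction of the form $-\beta(1+k)-a_i$ to the baseline exponent $2(\beta n-(\alpha(n-1)+2\gamma))/(np)$. A direct evaluation of these corrections shows that, under $1/2<\gamma<1$ and $1<\alpha<2$, exactly four additional candidates dominate: the $\nabla_y v$ piece of $R_1\nabla W_R$ (and the $R_2 W_R$ term itself) produces $\alpha/2+1-\beta$; the angular cutoff derivative $\psi'(|y|^2/z^2)$ produces $3-\alpha/2-\beta$; the radial cutoff derivative $\psi_R'(z)$ produces $\gamma+1-\beta$; and the phase derivative of $e^{i\lambda t/z^\alpha}$ produces $\alpha+2-2\beta$. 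The remaining summands, e.g.\ those involving $\partial_z v$ or the higher-order combinations $\psi''(|y|^2/z^2)$ and $(|y|/z)^4$, yield strictly larger exponents in this parameter range and are absorbed. Collecting all six dominant candidates produces \eqref{eq:epo72}.

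The main obstacle is the bookkeeping in the expansion: $\nabla W_R$ alone splits into six summands which, multiplied by $z^{1-\alpha}R_1$, each give a different power of $R$, and $R_2 W_R$ adds further variants through the cutoff derivatives. One must verify explicitly that precisely the four corrections listed above are the dominant ones throughout $1/2<\gamma<1$ and $1<\alpha<2$, while every other summand produces a strictly larger exponent and hence is irrelevant for the minimum; this is essentially an exercise in tracking homogeneities but is the delicate step of the proof.
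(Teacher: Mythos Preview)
Your proposal is correct and follows essentially the same route as the paper: split $\tilde F_R$ via \eqref{eq:epo70}, invoke \eqref{eq:epo26} for $F_R$, bound the Taylor remainders pointwise via \eqref{eq:epo42}--\eqref{eq:epo51}, expand $\nabla W_R$ by the product rule, and reduce each piece to an estimate of type \eqref{eq:epo29} after the change of variable $\xi=y/\sqrt{z^\alpha}$. One small clean-up: no $\psi''$ or $(|y|/z)^4$ terms actually arise in $R_1\nabla W_R$ or $R_2 W_R$ (those live only in $G_R\subset F_R$), and $R_2 W_R$ involves no cutoff derivatives at all --- so the bookkeeping is slightly lighter than you suggest, with only the four pieces $\psi'_R$, $\psi'$, phase derivative, and $\nabla\omega$ to track.
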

\begin{proof}
  Due to \eqref{eq:epo26}, we just need to estimate the rest terms in \eqref{eq:epo70}. For the term containing $R_2$, we have
\begin{align*}
  & \|z^{2-2\alpha}R_{2}\left(\frac{y}{z}\right)W_{R}\|_{L^{p'}((0, T);L^{q'}_{x})}
  \\
  &\ \ \
  \leq
  CT^{1/p'}\|\frac{|y|^{3}}{z^{2\alpha+1}}\omega\psi_{R}\psi\|_{L^{q'}_{x}}
  \leq
  CT^{1/p'}R^{-(\alpha/2+1)}\|M\psi_{R}\psi\|_{L^{q'}_{x}}
  \\
  &\ \ \ \leq  CT^{1/p'}R^{-(\alpha/2+1)+(\alpha(n-1)+2\gamma)/2q'},
\end{align*}
for any $R > 1$, and $1/2 < \gamma < 1$,
where $\omega$ is the rescaled eigenfunction in \eqref{eq:omega1}; here we denoted by
\begin{equation}\label{eq:MM}
  M(y,z) = \left(|y|/\sqrt{z^{\alpha}}\right)^{3} v\left(y/\sqrt{z^{\alpha}}\right),
\end{equation}
and we used \eqref{eq:epo29} with $L(y) = |y|^{3}v(y)$ at the last step.
Hence for $t\in(0,R^\beta)$ we obtain
\begin{equation}\label{eq:epo53}
\|z^{2-2\alpha}R_{2}\left(\frac{y}{z}\right)W_{R}\|_{L^{p'}((0, R^{\beta});L^{q'}_{x})} \leqslant CR^{\beta/p'-(\alpha/2+1)+(\alpha(n-1)+2\gamma)/2q'},
\end{equation}
for any $\beta > 0$.

Let us continue with the term corresponding to $R_{1}$ in \eqref{eq:epo70}. We need to estimate
\begin{equation}\label{eq:WR2}
\|z^{1-\alpha}R_{1}\left(\frac{y}{z}\right)\nabla W_{R}\|_{L^{p'}((0, T);L^{q'}_{x})}
\end{equation}
Notice that, by \eqref{eq:WR} we have
\begin{equation}\label{eq:WR3}
\nabla W_{R} = W\psi\nabla\psi_{R} + W\psi_{R}\nabla \psi+(\nabla W)\psi_{R}\psi;
\end{equation}
hence we can treat separately the three terms in \eqref{eq:WR2}. First we estimate
\begin{align*}\label{eq:epo54}
&\|z^{1-\alpha}R_{1}\left(\frac{y}{z} \right)W\psi\nabla\psi_{R}\|_{L^{p'}((0, T);L^{q'}_{x})}
\\
&\ \ \ \leq CT^{1/p'}\|\frac{|y|^{2}}{z^{\alpha+1}}\omega\psi^{'}_{R}\psi\|_{L^{q'}_{x}}
\leq CT^{1/p'}R^{-1}\|\Theta\psi^{'}_{R}\psi\|_{L^{q'}_{x}}
\\
&\ \ \ \leq CT^{1/p'}R^{-(\gamma+1)+(\alpha(n-1)+2\gamma)/2q'},
\end{align*}
for any $R > 1$, and $1/2 < \gamma < 1$, where $\Theta$ is the same as in \eqref{eq:tetapsi}.
Therefore
\begin{equation}\label{eq:epo55}
\|z^{1-\alpha}R_{1}\left(\frac{y}{z} \right)W\psi\nabla\psi_{R}\|_{L^{p'}((0, R^{\beta});L^{q'}_{x})} \leqslant CR^{\beta/p'-(\gamma+1)+(\alpha(n-1)+2\gamma)/2q'},
\end{equation}
for any $\beta > 0$. Analogously, since
\begin{equation*}
|\nabla \psi| \leqslant C\frac{|y|}{z^{2}}\psi^{'}\quad\text{provided that}\quad|y| < |z|,
\end{equation*}
we estimate
\begin{align*}
& \|z^{1-\alpha}R_{1}\left(\frac{y}{z} \right)W\psi_{R}\nabla \psi\|_{L^{p'}((0, T);L^{q'}_{x})}
\\
&\ \ \ \leq CT^{1/p'}\|\frac{|y|^{3}}{z^{\alpha+3}}\omega\psi_{R}\psi^{'}\|_{L^{q'}_{x}}
\leq CT^{1/p'}R^{-(3-\alpha/2)}\|M\psi_{R}\psi^{'}\|_{L^{q'}_{x}}
\\
&\ \ \ \leq  CT^{1/p'}R^{-(3-\alpha/2)+(\alpha(n-1)+2\gamma)/2q'},
\end{align*}
for any $R > 1$, and $1/2 < \gamma < 1$.
Hence
\begin{equation}\label{eq:epo57}
\|z^{1-\alpha}R_{1}\left(\frac{y}{z} \right)W\psi_{R}\nabla \psi\|_{L^{p'}((0, R^{\beta});L^{q'}_{x})} \leqslant CR^{\beta/p'-(3-\alpha/2)+(\alpha(n-1)+2\gamma)/2q'},
\end{equation}
for any $\beta > 0$.
For the remaining term, notice that by \eqref{eq:epo5} we have
\begin{align}\label{eq:epo58}
  & \|z^{1-\alpha}R_{1}\left(\frac{y}{z}\right)\psi_{R}\psi\nabla W\|_{L^{p'}((0, T);L^{q'}_{x})}
  \\
  &\
  \leq
  \|z^{1-\alpha}\frac{t}{z^{\alpha+1}}R_{1}
  \left(\frac{y}{z}\right)\omega\psi_{R}\psi\|_{L^{p'}((0, T);L^{q'}_{x})}
  +
  \|z^{1-\alpha}R_{1}\left(\frac{y}{z}\right)\psi_{R}\psi\nabla \omega\|_{L^{p'}((0, T);L^{q'}_{x})}.
  \nonumber
\end{align}
For the first term in \eqref{eq:epo58} we have
\begin{align*}\label{eq:epo60}
& \|z^{1-\alpha}\frac{t}{z^{\alpha+1}}R_{1}\left(\frac{y}{z}\right)
\omega\psi_{R}\psi\|_{L^{p'}((0, T);L^{q'}_{x})}
\\
&\ \ \ \leq CT^{1+1/p'}\|\frac{|y|^{2}}{z^{2\alpha+2}}\omega\psi_{R}\psi\|_{L^{q'}_{x}}
\leq CT^{1+1/p'}R^{-(\alpha+2)}\|\Theta\psi_{R}\psi\|_{L^{q'}_{x}}
\\
& \ \ \ \leq CT^{1+1/p'}R^{-(\alpha+2)+(\alpha(n-1)+2\gamma)/2q'},
\end{align*}
for any $R > 1$, and $1/2 < \gamma < 1$. Therefore
\begin{equation}\label{eq:1}
  \|z^{1-\alpha}\frac{t}{z^{\alpha+1}}R_{1}\left(\frac{y}{z}\right)
  \omega\psi_{R}\psi\|_{L^{p'}((0, R^\beta);L^{q'}_{x})}
  \leq
  CR^{\beta+\frac\beta{p'}-(\alpha+2)+(\alpha(n-1)+2\gamma)/2q'}
\end{equation}
for any $\beta > 0$. For the second term in \eqref{eq:epo58}, we proceed as follows:
\begin{align}\label{eq:epo61}
& \|z^{1-\alpha}R_{1}\left(\frac{y}{z}\right)\psi_{R}\psi\nabla \omega\|_{L^{p'}((0, T);L^{q'}_{x})}
\\
&\ \ \ \leq CT^{1/p'}\|\frac{|y|^{2}}{z^{\alpha+1}}\psi_{R}\psi\nabla \omega\|_{L^{q'}_{x}}
\leq CT^{1/p'}\|\frac{|y|^{2}}{z^{3\alpha/2+1}}\psi_{R}\psi(\nabla v)\left(\frac{y}{\sqrt{z^{\alpha}}}\right)\|_{L^{q'}_{x}}
\nonumber
\\
&\ \ \ \leq CT^{1/p'}R^{-(\alpha/2+1)}\|\widetilde{M}\psi_{R}\psi\|_{L^{q'}_{x}},
\nonumber
\end{align}
where
\begin{equation*}
  \widetilde{M}(y,z) =  \left(|y|/\sqrt{z^{\alpha}}\right)^{2} (\nabla v)\left(y/\sqrt{z^{\alpha}}\right).
\end{equation*}
Explicitly we have
\begin{align*}\label{eq:epo62}
\|\widetilde{M}\psi_{R}\psi\|_{L^{q'}_{x}}^{q'}
 & = \int_{R-R^{\gamma}}^{R+R^{\gamma}}dz\int_{|y| < z}\left| \left(\frac{|y|}{\sqrt{z^{\alpha}}}\right)^{2} (\nabla v)\left(\frac{y}{\sqrt{z^{\alpha}}}\right)\right|^{q'}dy
 \\
 & = \int_{R-R^{\gamma}}^{R+R^{\gamma}}dz\int_{|y| < \sqrt{z^{2-\alpha}}}\left||y|^{2}(\nabla v)(y)\right|^{q'}z^{(n-1)\alpha/2}dy
 \\
 &\leq CR^{(\alpha(n-1)+2\gamma)/2},
\end{align*}
for all $R>1$, and $1/2<\gamma<1$. Hence by \eqref{eq:epo61} we obtain
\begin{equation}\label{eq:2}
  \|z^{1-\alpha}R_{1}\left(\frac{y}{z}\right)\psi_{R}\psi\nabla \omega\|_{L^{p'}((0, R^\beta);L^{q'}_{x})}
  \leq
  CR^{\frac{\beta}{p'}-(\alpha/2+1)+\frac{\alpha(n-1)+2\gamma}{2q'}},
\end{equation}
for any $\beta>0$. In conclusion, by \eqref{eq:epo58}, \eqref{eq:1} and \eqref{eq:2} we obtain
\begin{align}\label{eq:epo65}
& \|z^{1-\alpha}R_{1}\left(\frac{y}{z}\right)\psi_{R}\psi\nabla W\|_{L^{p'}((0, R^{\beta});L^{q'}_{x})}
\\
&\ \
\leq CR^{\beta/p'+(\alpha(n-1)+2\gamma)/2q'}\max\{R^{\beta-(\alpha+2)}, R^{-(\alpha/2+1)}\},
\nonumber
\end{align}
for any $\beta>0$. Hence, by \eqref{eq:WR3}, \eqref{eq:epo55}, \eqref{eq:epo57} and \eqref{eq:epo65} we can conclude that
\begin{align}\label{eq:epo66}
&\|z^{1-\alpha}R_{1}\left(\frac{y}{z}\right)\nabla W_{R}\|_{L^{p'}((0, R^{\beta});L^{q'}_{x})}
\\
&\ \ \ \leq  CR^{\beta/p'+(\alpha(n-1)+2\gamma)/2q'}\max \{R^{-(\gamma+1)}, R^{\alpha/2-3},R^{\beta-(\alpha+2)}, R^{-(\alpha/2+1)}\},
\nonumber
\end{align}
for any $\beta > 0$.
Now, by \eqref{eq:epo53}, \eqref{eq:epo66} we get
\begin{align}\label{eq:epo67}
&\|z^{2-2\alpha}R_{2}\left(\frac{y}{z}\right)W_{R} + z^{1-\alpha}R_{1}\left(\frac{y}{z}\right)\nabla W_{R} \|_{L^{p'}((0, R^{\beta});L^{q'}_{x})}
\\
&\ \ \ \leq  CR^{\beta/p'+(\alpha(n-1)+2\gamma)/2q'}\max \{R^{-(\gamma+1)}, R^{\alpha/2-3},R^{\beta-(\alpha+2)}, R^{-(\alpha/2+1)}\},
\nonumber
\end{align}
for any $\beta > 0$.
By \eqref{eq:epo70}, we have
\begin{align}\label{eq:quasi}
  & \frac{\|W_{R}\|_{L^{p}((0,R^{\beta});L^{q}_{x})}}{\|\tilde{F}_{R}\|_{L^{p'}
  ((0,R^{\beta});L^{q'}_{x})}}
  \\
  &\ \
  \geq
  \frac{\|W_{R}\|_{L^{p}((0,R^{\beta});L^{q}_{x})}}{\|F_{R}\|_{L^{p'}
  ((0,R^{\beta});L^{q'}_{x})}+\|z^{2-2\alpha}
  R_{2}\left(\frac{y}{z}\right)W_{R} + z^{1-\alpha}R_{1}\left(\frac{y}{z}\right)\nabla W_{R} \|_{L^{p'}((0, R^{\beta});L^{q'}_{x})}}.
  \nonumber
\end{align}
Finally, the thesis \eqref{eq:epo71} follows from \eqref{eq:epo22}, \eqref{eq:epo23}, \eqref{eq:epo67} and \eqref{eq:quasi}, when the couple $(p,q)$ satisfies the Schr\"odinger admissibility condition.
\end{proof}
Now let us come back to the inhomogeneous Cauchy problem \eqref{eq:epo69}-\eqref{eq:epo69datum}. Notice that
\begin{equation}\label{eq:epo74}
 \|u_{R}\|_{L^{p}_{t}L^{q}_{x}} \geq \|u_{R}\|_{L^{p}((0, R^{\beta}); L^{q}_{x})} =  \|W_{R}\|_{L^{p}((0, R^{\beta}); L^{q}_{x})},
\end{equation}
from which it follows that
\begin{equation}\label{eq:epo75}
 \frac{\|u_{R}\|_{L^{p}_{t}L^{q}_{x}}}{\|f_{R}\|_{L^{2}_{x}} + \|\tilde{F}_{R}\|_{L^{p'}_{t}L^{q'}_{x}}} \geq \frac{ \|W_{R}\|_{L^{p}((0, R^{\beta}); L^{q}_{x})}}{\|f_{R}\|_{L^{2}_{x}} + \|\tilde{F}_{R}\|_{L^{p'}((0, R^{\beta}); L^{q'}_{x})}}
\end{equation} \medskip
\noindent
Notice that \eqref{eq:epo25} implies that for any $1 \leqslant p < \infty, \quad 1/2 < \gamma < 1$,
\begin{equation}\label{eq:epo76}
\frac{\|W_{R}\|_{L^{p}((0, R^{\beta}); L^{q}_{x})}}{\|f_{R}\|_{L^{2}_{x}}} \to +\infty,
\end{equation}
as $R\to+\infty$,
provided that $\beta > (\alpha(n-1)+2\gamma)/n$. On the other hand, the function $\delta(n, \gamma, \beta, p)$ defined in \eqref{eq:epo72}, varies continuously with $\alpha$, and in particular
\begin{align}\label{eq:epo77}
&\delta\left(n, \gamma, \frac{\alpha(n-1)+2\gamma}{n}, p\right)
\\
& =\min\left\{\frac{(n - 1)(2\gamma - \alpha)}{n}, \frac{(2 - \alpha)n + 3\alpha - 6\gamma}{n}, \frac{(2 - \alpha)n + 2\alpha - 4\gamma}{2n},
\right.
\nonumber
\\
&\ \ \ \ \left. \frac{3(2 - \alpha)n + 2\alpha - 4\gamma}{2n}, \frac{n(\gamma + 1) - (n - 1)\alpha - 2\gamma}{n}, \frac{(2 - \alpha)n + 2\alpha - 4\gamma}{n}\right\}.
\nonumber
\end{align}
 Hence $\delta$ is strictly positive if
 \begin{equation}\label{eq:range}
   \frac\alpha2 < \gamma < \frac\alpha2 + \frac{(2 - \alpha)n}6.
 \end{equation}
 Since $1<\alpha<2$, the range given by \eqref{eq:range} contains some $\gamma \in (1/2, 1)$; hence, by choosing $\beta > (\alpha(n-1)+2\gamma)/n$, we obtain $\delta>0$.
This remark, together with \eqref{eq:epo71}, gives
\begin{equation}\label{eq:epo78}
\frac{\|W_{R}\|_{L^{p}((0,R^{\beta});L^{q}_{x})}}{\|\tilde{F}_{R}\|_{L^{p'}((0,R^{\beta});L^{q'}_{x})}} \to +\infty,
\end{equation}
as $R\to+\infty$, and by \eqref{eq:epo75}, \eqref{eq:epo76}, \eqref{eq:epo78} we conclude that
\begin{equation}\label{eq:epo79}
\frac{\|u_{R}\|_{L^{p}_{t}L^{q}_{x}}}{\|f_{R}\|_{L^{2}_{x}} + \|\tilde{F}_{R}\|_{L^{p'}_{t}L^{q'}_{x}}} \to +\infty,
\end{equation}
as $R\to+\infty$.
The last inequality shows that the following Strichartz estimates
\begin{equation}\label{eq:epo80}
\|u\|_{L^{p}_{t}L^{q}_{x}} \leqslant C\left(\|f_{R}\|_{L^{2}_{x}} +  \|F\|_{L^{p'}_{t}L^{q'}_{x}}\right)
\end{equation}
cannot be satisfied by solutions of the inhomogeneus Schr\"odinger equation
\begin{equation}\label{eq:epo81}
\begin{cases}
i\partial_{t}u-\Delta u + 2iA(x)\cdot \nabla u+|A|^{2}(x)u=F
\\
u(x,0)=f(x),
\end{cases}
\end{equation}
where the potential $A$ is given by \eqref{eq:A}. In order to disprove Strichartz estimates for the corresponding homogeneus Schr\"odinger equation
in the non-endpoint case $p > 2$, it is sufficient to apply a $TT^{*}$-argument, by using the standard Christ-Kiselev Lemma (see \cite{CK}). Finally, also the endpoint estimate $p=2$ fails; indeed, if it were true, by interpolation with the mass conservation (i.e. the $L^\infty L^2$-estimate) also the non-endpoint estimates would be satisfied. This completes the proof of Theorem \ref{thm:odd}.

\section{Preliminaries for the even-dimensional case}\label{sec:even}
Before the proof of Theorem \ref{thm:even}, we start with some preliminary computations, analogous to the ones performed in Section \ref{sec:odd}. Throughout this Section, $n=2k$ will be an even number, $n\geq4$. In the even dimension $n-2$, let us again consider an eigenfunction $v$ of the operator $T$ corresponding to an eigenvalue $\lambda$, as in \eqref{eq:epo2}, \eqref{eq:lp}.
By scaling $v$ we can create a function $\omega$ on $\R^n$ as follows:
\begin{equation}
\omega(x) := v\left(\frac{y}{\sqrt{|z|^{\alpha}}}\right),
\qquad
x:=(y,z) \in \mathbb{R}^{n-2} \times \mathbb{R}^{2} \setminus \{0\},
\end{equation}
where $\alpha$ is the exponent in \eqref{eq:AA}. It is easy to see that $\omega$ satisfies
\begin{equation}\label{eq:epe4}
-\left(\nabla- \frac{i}{|z|^{\alpha}}M(y,0,0)^t\right)^2\omega +  \frac{1}{|z|^{2\alpha}}|y|^2\omega = \frac{\lambda}{|z|^{\alpha}} \omega,
\end{equation}
for any $(y,z) \in \mathbb{R}^{n-2}\times \mathbb{R}^{2} \setminus \{0\}$, where $M$ is the matrix in \eqref{eq:Meven}.
Let us now introduce the time-dependent function
\begin{equation}\label{eq:epe5}
W(t,y,z) = e^{i( \lambda t /|z|^{\alpha})}\omega(y,z),
\end{equation}
for $(t,y,z) \in \mathbb{R} \times \mathbb{R}^{n-2} \times \mathbb{R}^{2}\setminus \{0\}$. As in the odd-dimensional case, a direct computation shows that $W$ solves
\begin{equation}\label{eq:epe10}
i\partial_{t}W - \left(\nabla- \frac{i}{|z|^{\alpha}}M(y,0,0)^t\right)^2 W
+  \frac{1}{|z|^{2\alpha}}|y|^2W = F,
\end{equation}
where $F$ is given by
\begin{align}\label{eq:Feven}
 F(t,y,z)
 =  & e^{i( \lambda t /|z|^{\alpha})}\left\{
 \left[\frac{\alpha^{2}\lambda^{2}t^{2}}{|z|^{2\alpha+2}}-\frac{\alpha^{2}i \lambda t}{|z|^{\alpha+2}}\right]
 v\left(\frac{y}{\sqrt{|z|^{\alpha}}} \right)
 \right.
 \\
 & \left. - G\left(\frac{y}{\sqrt{|z|^{\alpha}}}\right)\cdot\left[\frac{\alpha^{2}i \lambda t}{|z|^{\alpha+2}} - \frac{\alpha^{2}}{4|z|^{2}}\right] - \frac{\alpha^{2}}{4|z|^{2}} H\left(\frac{y}{\sqrt{|z|^{\alpha}}}\right)
 \right\},
 \nonumber
\end{align}
with
\begin{equation}\label{eq:epe7}
G(y) = y \cdot \nabla_{y}v(y),
\qquad
H(y) = yD^{2}v(y)\cdot y.
\end{equation}
Now we introduce a smooth real-valued cutoff $\psi\in C^{\infty}(\mathbb{R})$ satisfying
\begin{equation*}
  \psi(s) = 0
  \quad
  \text{for } |s| > 1,
  \qquad
  \psi(s) = 1
  \quad
  \text{for } |s| < 1/2.
\end{equation*}
Let us fix a parameter $\gamma \in (1/2,1)$ and denote by
\begin{equation}\label{eq:psiReven}
  \psi^{1}_{R}(z) := \psi\left(\frac{z_{1} - R}{R^{\gamma}}\right),
  \qquad
  \psi^{2}_{R}(z) := \psi\left(\frac{z_{2} - R}{R^{\gamma}}\right),
\end{equation}
for $z=(z_1,z_2)\in\R^2$.
We truncate $W$ as follows:
\begin{equation}\label{eq:WReven}
W_{R}(t,y,z) := W(t,y,z)\psi^{1}_{R}(z)\psi^{2}_{R}(z)
\psi\left(\frac{|y|^{2}}{|z|^{2}}\right).
\end{equation}
Again, by direct computations it turns out that $W_R$ solves the Cauchy problem
\begin{equation}\label{eq:epe17}
\begin{cases}
  i\partial_{t}W_{R}-\left(\nabla-\frac{i}{|z|^{\alpha}}M(y,0,0)^t\right)^2W_{R}
  + \frac{1}{|z|^{2\alpha}}|y|^2W_{R} = F_{R}
  \\
  W_{R}(0,y,z) = f_{R}(y,z).
\end{cases}
\end{equation}
Here the initial datum is given by
\begin{equation}\label{eq:epe18}
f_{R}(y,z) = \psi^{1}_{R}(z)\psi^{2}_{R}(z)
\psi\left(\frac{|y|^{2}}{|z|^{2}}\right)\omega(y,z),
\end{equation}
while $F_R$ is given by
\begin{equation}\label{eq:epe19}
F_{R}(t,y,z) =  \psi^{1}_{R}\psi^{2}_{R}\psi F + G_{R},
\end{equation}
where $F$ is defined in \eqref{eq:Feven} and $G_R$ has the explicit form
\begin{align}\label{eq:Geven}
G_{R}(t,y,z) = &e^{i( \lambda t /|z|^{\alpha})}\left\{-\omega\left[\frac{2(n - 2)}{|z|^{2}}\psi^{1}_{R}\psi^{2}_{R}\psi^{'} + \frac{4|y|^{2}}{|z|^{4}}\psi^{1}_{R}\psi^{2}_{R}(\psi''+\psi')
+ \psi^{2}_{R}\psi^{1''}_{R}\psi
\right.\right.
\\
& + \psi^{1}_{R}\psi^{2''}_{R}\psi  - \frac{4|y|^{2}}{|z|^{4}}\left(\psi^{1'}_{R}\psi^{2}_{R}\psi^{'}z_{1}
+\psi^{1}_{R}\psi^{2'}_{R}\psi^{'}z_{2}\right) + \frac{4|y|^{4}}{|z|^{6}}\psi^{1}_{R}\psi^{2}_{R}\psi^{''}
\nonumber
\\
& \left.
 -
\frac{2\alpha i\lambda t}{|z|^{\alpha+2}}\left(\psi^{2}_{R}\psi^{1'}_{R}\psi z_{1}
+\psi^{1}_{R}\psi^{2'}_{R}\psi z_{2}\right)
+ \frac{4\alpha i\lambda t|y|^{2}}{|z|^{\alpha+4}}\psi^{1}_{R}\psi^{2}_{R}\psi^{'}\right]
\nonumber
\\
&- G\left(\frac{y}{\sqrt{|z|^\alpha}}\right)\cdot\left[ \frac{4}{|z|^{2}}\psi^{1}_{R}\psi^{2}_{R}\psi^{'}
+ \frac{2\alpha |y|^{2}}{|z|^{4}}\psi^{1}_{R}\psi^{2}_{R}\psi^{'}
\right.
\nonumber
\\
&\left.\left. - \frac{\alpha}{|z|^{2}}\left(\psi^{1'}_{R}\psi^{2}_{R}\psi z_{1}
+
\psi^{1}_{R}\psi^{2'}_{R}\psi z_{2}\right)\right]\right\}.
\nonumber
\end{align}
The main result of this Section is the following.
\begin{lemma}\label{lem:even}
Let $n\geq4$ be an even number, $p, q \in(1,\infty)$, and $\gamma\in(1/2,1)$; then we have
\begin{equation}\label{eq:epe21}
\|f_{R}\|_{L^{2}_{x}} \leq C R^{(\alpha(n-2)+4\gamma)/4},
\end{equation}
\begin{equation}\label{eq:epe22}
\|W_{R}\|_{L^{p}_{T}L^{q}_{x}} \geq C T^{1/p} R^{(\alpha(n-2)+4\gamma)/2q},
\end{equation}
\begin{equation}\label{eq:epe23}
\|F_{R}\|_{L^{p}_{T}L^{q}_{x}} \leqslant C T^{1/p} R^{(\alpha(n-2)+4\gamma)/2q}\max\{R^{-2\gamma}, T^{2}R^{-(2\alpha+2)}\},
\end{equation}
for all $R>2$, $T>0$ and some constant $C=C(q,\gamma)>0$
In particular, if $(p,q)\neq(\infty,2)$ is a Schr\"odinger admissible couple
and $\beta > 0$, then the following estimates hold
\begin{equation}\label{eq:epe25}
\frac{\|W_{R}\|_{L^{p}((0,R^{\beta}); L^{q}_{x})}}{\|f_{R}\|_{L^{2}_{x}}} \geq CR^{(\beta n - (\alpha(n-2)+4\gamma))/np},
\end{equation}
\begin{equation}\label{eq:epe26}
\frac{\|W_{R}\|_{L^{p}((0,R^{\beta}); L^{q}_{x})}}{\|F_{R}\|_{L^{p'}((0,R^{\beta}); L^{q'}_{x})}} \geq CR^{\kappa},
\end{equation}
for any $R>2$,
where
\begin{equation}
\kappa = \kappa(n, \gamma, \beta, p) = 2\left(\frac{\beta n - (\alpha(n-2)+4\gamma)}{np}\right) +\min\{2\gamma - \beta, 2\alpha + 2 - 3\beta\}
\end{equation}
and the constants $C > 0$ do not depend on $R > 2$.
\end{lemma}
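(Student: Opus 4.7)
The plan is to follow exactly the scheme developed for Lemma \ref{lem:odd}, with the only structural difference being that the cutoff in the $z$-variable is now two-dimensional (via $\psi^1_R\psi^2_R$) while the tangential direction has dimension $n-2$ instead of $n-1$. This accounts for the shift from $(\alpha(n-1)+2\gamma)$ to $(\alpha(n-2)+4\gamma)$ in the exponents: one loses a factor of $R^\alpha$ in $y$-volume but gains an extra factor of $R^{2\gamma}$ from integrating the additional $z$-coordinate against a cutoff of width $R^\gamma$. The core bilateral estimate to establish, for a fixed nonzero $L\in\bigcap_{p\geq 1}L^p(\R^{n-2})$ and
\[
\Lambda(y,z):=L\!\left(\tfrac{y}{\sqrt{|z|^\alpha}}\right),
\]
is
\[
cR^{(\alpha(n-2)+4\gamma)/2q}\leq\|\Lambda\,\psi^1_R\psi^2_R\psi\|_{L^q_x}\leq CR^{(\alpha(n-2)+4\gamma)/2q},
\qquad R>2.
\]
Taking $L=v$ (respectively $L=v,\ \nabla v,\ y\cdot\nabla v,\ y\,D^2v\cdot y$) immediately yields \eqref{eq:epe21} and \eqref{eq:epe22}.

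To prove the bilateral estimate, first observe that on the support of $\psi^1_R\psi^2_R$ we have $z_j\in(R-R^\gamma,R+R^\gamma)$, so $|z|\sim R$ and the $z$-support has measure comparable to $R^{2\gamma}$. The cutoff $\psi(|y|^2/|z|^2)$ restricts $|y|\lesssim|z|\sim R$, which contains the ball where the change of variables $y=\sqrt{|z|^\alpha}\,y'$ is nondegenerate with Jacobian $|z|^{(n-2)\alpha/2}\sim R^{(n-2)\alpha/2}$. Substituting and integrating the fixed, nonzero function $|L(y')|^q$ over a ball inside the image domain produces the lower bound; the upper bound is obtained by enlarging the domain of integration and using $L\in L^q$. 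As in the odd-dimensional case, the same argument (with the same kind of change of variables) gives the variants with any number of derivatives on the cutoffs, where each derivative $\psi^{j\prime}_R$ costs a multiplicative factor $R^{-\gamma}$ and each derivative $\psi^{(k)}$ is harmless.

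For \eqref{eq:epe23} I split $F_R=\psi^1_R\psi^2_R\psi F+G_R$ and estimate each summand in \eqref{eq:Feven} and \eqref{eq:Geven} individually. In $\psi^1_R\psi^2_R\psi F$ the three types of terms carry the weights $t^2/|z|^{2\alpha+2}$, $t/|z|^{\alpha+2}$, $1/|z|^2$, and using $|z|\sim R$ together with the bilateral estimate above gives a bound of the form $CT^{1/p}R^{(\alpha(n-2)+4\gamma)/2q}\max\{R^{-2},T^2R^{-(2\alpha+2)}\}$, with the middle $t/|z|^{\alpha+2}$ contribution swallowed as a geometric mean of the two endpoints. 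For $G_R$, each term contains either a factor $\psi^{j\prime}_R$ or $\psi^{j\prime\prime}_R$ (worth $R^{-\gamma}$ or $R^{-2\gamma}$), or factors like $|y|^2/|z|^4$ and $|y|^4/|z|^6$ which are controlled by absorbing $|y|/\sqrt{|z|^\alpha}$-powers into the bounded function, precisely in the spirit of \eqref{eq:tetapsi}; all terms are geometric means or dominate one of the two endpoint terms $R^{-2\gamma}$ and $T^2R^{-(2\alpha+2)}$, so the final maximum in \eqref{eq:epe23} follows. Finally, \eqref{eq:epe25} and \eqref{eq:epe26} are obtained by dividing \eqref{eq:epe22} by \eqref{eq:epe21} and \eqref{eq:epe23} respectively, setting $T=R^\beta$, and using the Schr\"odinger admissibility relation $2/p=n(1/2-1/q)$ to consolidate the exponents, exactly as in the closing step of Lemma \ref{lem:odd}.

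The main obstacle is purely bookkeeping: $G_R$ in \eqref{eq:Geven} contains several mixed terms in which $\partial_z$ acts separately on $\psi^1_R(z_1)$ and on $\psi^2_R(z_2)$, producing factors of $z_1$ and $z_2$, and other terms arising from differentiating $e^{i\lambda t/|z|^\alpha}$ via $\nabla_z|z|^{-\alpha}=-\alpha|z|^{-\alpha-2}z$. Each such factor is of size $|z|\sim R$ and is therefore absorbed by an explicit power of $|z|$ in the denominator, reducing the analysis to the same catalogue of $L^q_x$-norms treated in the proof of Lemma \ref{lem:odd}. No new mechanism is needed beyond careful tracking of these exponents and verifying that all contributions are dominated by the two endpoint terms claimed in \eqref{eq:epe23}.
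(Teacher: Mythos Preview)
Your proposal is correct and follows exactly the approach the paper intends: the paper itself does not give a separate proof of Lemma~\ref{lem:even} but simply remarks (Remark~\ref{rem:noproof}) that the argument is identical to that of Lemma~\ref{lem:odd}, the only difference being the numerology of exponents coming from the two-dimensional $z$-cutoff $\psi^1_R\psi^2_R$, which is precisely the shift $(\alpha(n-1)+2\gamma)\to(\alpha(n-2)+4\gamma)$ you explain. Your treatment of the additional mixed terms in $G_R$ (factors of $z_1,z_2$ absorbed by $|z|\sim R$) is the correct way to reduce everything to the catalogue of estimates already established in the odd case.
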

\begin{remark}\label{rem:noproof}
  Notice that the statement of Lemma \ref{lem:even} is almost the same as the one of Lemma \ref{lem:odd} in the odd-dimensional case. The only difference is in the numerology of the exponents, and it is due to the dimensional gap between the cutoffs in \eqref{eq:WR}, \eqref{eq:WReven}. Since the proof is completely analogous (with more terms to be estimated due to \eqref{eq:Geven} but all of the same type), we omit straightforward details.
\end{remark}

\section{Proof of the Theorem \ref{thm:even}}\label{sec:proofeven}
We can pass to the proof of Theorem \ref{thm:even}, which is completely analogous to the one of Theorem \ref{thm:odd}. For $x\in\R^n$, we will denote $x=(y,z)$, where $y=(y_1,\dots,y_{n-2})\in\R^{n-2}$ and $z=(z_1,z_2)\in\R^2$. Let us write again the expansion
\begin{equation}\label{eq:Hexpleven}
  H=-\Delta+2iA\cdot\nabla+|A|^2.
\end{equation}
By homogeneity we have
\begin{equation}
A(y,z) = |z|^{1-\alpha}A\left(\frac{y}{|z|}, 0, 1\right),
\end{equation}
for all $(y,z) \in \mathbb{R}^{n-2}\times \mathbb{R}^{2}\setminus \{0\}$.
Let us fix again the point $P(0,\dots,0,1)$, as an arbitrary direction in the degeneracy plane generated by the axes $z_1,z_2$. By Taylor expanding around $P$, we get the analogous to formulas \eqref{eq:epo39} and \eqref{eq:epo50}, i.e.
\begin{align}\label{eq:epe39}
2iA(y,z)\cdot\nabla u
& =
2i|z|^{1-\alpha}\left\{M\left(\frac{y}{z}, 0, 0\right)^{t}
+
R_{1}\left(\frac{y}{|z|}\right)\right\}\cdot\nabla u
\\
& = 2i|z|^{-\alpha}M\left(y, 0, 0\right)^{t}\cdot\nabla u
+
2i|z|^{1-\alpha}R_{1}\left(\frac{y}{|z|}\right)\cdot\nabla u,
\nonumber
\end{align}
\begin{equation}\label{eq:epe50}
  |A|^2(y,z)=\frac{2}{|z|^{2\alpha}}|y|^2 + |z|^{2-2\alpha}R_{2}\left(\frac{y}{|z|}\right),
\end{equation}
where now $M$ is the matrix in \eqref{eq:Meven} and the rests satisfy
\begin{equation}\label{eq:epe42}
\left|R_{1}\left(\frac{y}{|z|}\right)\right| \leq C\frac{|y|^{2}}{|z|^{2}},
\qquad
|R_{2}\left(\frac{y}{|z|}\right)| \leq C\frac{|y|^{3}}{|z|^{3}},
\end{equation}
provided $|y|<|z|$.

As in Section \ref{sec:proofodd}, we select a couple $(\lambda, v(y))$ solving the eigenvalue problem \eqref{eq:epo2}; again, this fix the functions $W_{R}$, $f_{R}$, and $F_{R}$ which we use in the sequel.
As in the odd dimensional case, due to \eqref{eq:Hexpleven}, \eqref{eq:epe39}, \eqref{eq:epe50}, it is natural to consider the following auxiliar equation
\begin{align}\label{eq:epe69}
& i\partial_{t}u_{R}-\left(\nabla- \frac{i}{|z|^{\alpha}} M(y,0,0)^t\right)^2 u_R
\\
&\ \ \ + \frac{1}{|z|^{2\alpha}}|y|^2u_R
+ 2i |z|^{1-\alpha}R_{1}\left(\frac{y}{|z|}\right) \nabla u_R + |z|^{2-2 \alpha}R_{2}\left(\frac{y}{|z|}\right)u_{R} =\tilde{F}_{R},
\nonumber
\end{align}
with initial datum
\begin{equation}\label{eq:epe69datum}
  u_{R}(0, y, z) = f_{R},
\end{equation}
where
\begin{equation}\label{eq:epe70}
 \tilde{F}_{R}(t,y,z) = \chi_{(0, R^{\beta})}(t)\left\{F_{R} + 2i |z|^{1-\alpha}R_{1}\left(\frac{y}{|z|}\right)\nabla W_{R} + |z|^{2-2 \alpha}R_{2}\left(\frac{y}{|z|}\right)W_{R}\right\},
\end{equation}
$\beta$ is the same as in Lemma \ref{lem:even}, and $f_{R}$, $F_{R}$ are given by \eqref{eq:epe18}, \eqref{eq:epe19}. Again, notice that, due to \eqref{eq:epe17}, \eqref{eq:epe70}, the solution $u_R$ of the Cauchy problem \eqref{eq:epe69}-\eqref{eq:epe69datum} coincides with the solution $W_R$ of \eqref{eq:epe17} for small times $t\in(0,R^\beta)$. The crucial result is the following.
\begin{lemma}\label{lem:maineven}
  Let $(p,q)$ be a Schr\"odinger admissible couple, $(p,q)\neq(\infty,2)$, and let $(p',q')$ be the dual couple. The following estimate holds:
  \begin{equation}\label{eq:epe71}
    \frac{\|W_{R}\|_{L^{p}((0,R^{\beta});L^{q}_{x})}}{\|\tilde{F}_{R}\|_{L^{p'}
    ((0,R^{\beta});L^{q'}_{x})}} \geq CR^{\delta}, \qquad \forall 1/2 < \gamma < 1,
  \end{equation}
  where
  \begin{align}\label{eq:epe72}
     & \delta  =\delta(n, \alpha, \gamma, \beta, p) = 2\left(\frac{\beta n - (\alpha(n-2)+4\gamma)}{np}\right)\\
     &+ \min\left\{2\gamma - \beta, 2\alpha + 2 -3\beta, \frac{\alpha}{2} + 1 - \beta, 3 - \frac{\alpha}{2} - \beta, \gamma + 1 - \beta, \alpha + 2 - 2\beta\right\},
     \nonumber
  \end{align}
  and $\gamma$ is the same as in \eqref{eq:psiReven}.
\end{lemma}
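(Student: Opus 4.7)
The plan is to mimic, step by step, the proof of Lemma \ref{lem:mainodd}, using as the main input the bounds of Lemma \ref{lem:even} (in particular \eqref{eq:epe22}, \eqref{eq:epe23} and \eqref{eq:epe26}) in place of their odd-dimensional counterparts. The starting point is the decomposition
\begin{equation*}
  \tilde F_R = \chi_{(0,R^\beta)}(t)\Bigl\{F_R + 2i|z|^{1-\alpha}R_1\!\left(\tfrac{y}{|z|}\right)\nabla W_R + |z|^{2-2\alpha}R_2\!\left(\tfrac{y}{|z|}\right)W_R\Bigr\}
\end{equation*}
from \eqref{eq:epe70}. Since $F_R$ is already controlled by \eqref{eq:epe26}, it suffices to show that the two remainder contributions in $\tilde F_R$ produce, after division by $\|W_R\|_{L^p((0,R^\beta);L^q_x)}$, at worst the exponents appearing inside the $\min$ in \eqref{eq:epe72}.

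First I would estimate the $R_2$-term: on the support of $W_R$ one has $|y|<|z|\sim R$, so \eqref{eq:epe42} yields $|z|^{2-2\alpha}|R_2|\lesssim |y|^3/|z|^{2\alpha+1}\lesssim R^{-(\alpha/2+1)}(|y|/\sqrt{|z|^\alpha})^3$. Introducing $M(y,z):=\bigl(|y|/\sqrt{|z|^\alpha}\bigr)^3 v\bigl(y/\sqrt{|z|^\alpha}\bigr)$ and applying the analogue of estimate \eqref{eq:epo29} in the even-dimensional setting (where the weight $R^{(\alpha(n-1)+2\gamma)/2q}$ becomes $R^{(\alpha(n-2)+4\gamma)/2q}$ because of the two cutoffs $\psi^1_R$, $\psi^2_R$), one obtains
\begin{equation*}
  \bigl\||z|^{2-2\alpha}R_2W_R\bigr\|_{L^{p'}((0,R^\beta);L^{q'}_x)}\lesssim R^{\beta/p'-(\alpha/2+1)+(\alpha(n-2)+4\gamma)/2q'}.
\end{equation*}

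Next I would treat the $R_1\cdot\nabla W_R$ term by splitting $\nabla W_R$ as in \eqref{eq:WR3}, into the three pieces coming from $\nabla\psi^1_R\psi^2_R+\psi^1_R\nabla\psi^2_R$, from $\nabla\psi(|y|^2/|z|^2)$, and from $\nabla W$ itself (the last piece being further split according to $W=e^{i\lambda t/|z|^\alpha}\omega$ into a factor coming from the time-phase and a factor involving $\nabla\omega=|z|^{-\alpha/2}(\nabla v)(y/\sqrt{|z|^\alpha})$). For each piece I would combine the pointwise bound $|R_1|\lesssim |y|^2/|z|^2$ with the appropriate even-dimensional version of the $L^{q'}$ estimates \eqref{eq:epo29} and its primed variants; this yields, in complete analogy with \eqref{eq:epo55}, \eqref{eq:epo57}, \eqref{eq:1}, \eqref{eq:2}, the bound
\begin{equation*}
  \bigl\||z|^{1-\alpha}R_1\nabla W_R\bigr\|_{L^{p'}((0,R^\beta);L^{q'}_x)}\lesssim R^{\beta/p'+(\alpha(n-2)+4\gamma)/2q'}\max\bigl\{R^{-(\gamma+1)},R^{\alpha/2-3},R^{\beta-(\alpha+2)},R^{-(\alpha/2+1)}\bigr\}.
\end{equation*}

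Finally, combining the two remainder bounds with \eqref{eq:epe26}, and using the elementary inequality
\begin{equation*}
  \frac{\|W_R\|_{L^p((0,R^\beta);L^q_x)}}{\|\tilde F_R\|_{L^{p'}((0,R^\beta);L^{q'}_x)}}\geq\frac{\|W_R\|}{\|F_R\|+\|\text{remainders}\|},
\end{equation*}
I obtain \eqref{eq:epe71} with the six exponents listed in the $\min$ of \eqref{eq:epe72}: $2\gamma-\beta$ and $2\alpha+2-3\beta$ come from $F_R$ via \eqref{eq:epe26}, while $\alpha/2+1-\beta$, $3-\alpha/2-\beta$, $\gamma+1-\beta$, and $\alpha+2-2\beta$ come from the four terms in the max above (after absorbing the $R^{\beta/p'}$ factor and comparing with the $R$-exponents of $\|W_R\|$ via admissibility $2/p+n/q=n/2$). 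The main bookkeeping obstacle is not conceptual but combinatorial: keeping track of the new exponent $\alpha(n-2)+4\gamma$ (replacing $\alpha(n-1)+2\gamma$) across the many sub-estimates, and checking that the two products $\psi^1_R\nabla\psi^2_R$ and $\nabla\psi^1_R\,\psi^2_R$, together with the factors $z_j/|z|$ appearing in $\nabla\psi(|y|^2/|z|^2)$ and in $G_R$ from \eqref{eq:Geven}, do not yield any new exponent beyond those already present in \eqref{eq:epe72}. As Remark \ref{rem:noproof} indicates, these additional terms are structurally identical to the ones already handled in dimension $n$ odd, so they produce the same list of exponents; for this reason I would indicate the above scheme and omit the routine case-by-case arithmetic.
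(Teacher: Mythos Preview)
Your proposal is correct and follows essentially the same approach as the paper: reduce to controlling the two remainder terms in \eqref{eq:epe70}, estimate them exactly as in the odd-dimensional proof of Lemma~\ref{lem:mainodd} with the exponent $\alpha(n-1)+2\gamma$ replaced by $\alpha(n-2)+4\gamma$, and conclude via \eqref{eq:epe22}, \eqref{eq:epe23}, \eqref{eq:epe26} and the elementary lower bound \eqref{eq:quasieven}. The paper's own proof is in fact terser than yours, recording only the resulting analogue \eqref{eq:epe67} of \eqref{eq:epo67} and omitting the term-by-term bookkeeping you sketch.
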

\begin{remark}
  Notice that the only difference in the statements of Lemmas \ref{lem:mainodd}, \ref{lem:maineven} is in the first terms of formulas \eqref{eq:epo72}, \eqref{eq:epe72} for the exponents $\delta$. This is due to the dimensional difference between the cutoffs functions in \eqref{eq:psiR} and \eqref{eq:psiReven}.
\end{remark}
\begin{proof}[Proof of Lemma \ref{lem:maineven}]
  The proof is completely analogous to the one of Lemma \ref{lem:mainodd}. Again, due to \eqref{eq:epe26} it is sufficient to estimate the rest terms in \eqref{eq:epe70}. With the same type of computations made in the odd-dimensional case, we get the analogous to estimate \eqref{eq:epo67}, which is now the following:
\begin{align}\label{eq:epe67}
  &\||z|^{2-2\alpha}R_{2}\left(\frac{y}{|z|}\right)W_{R} + |z|^{1-\alpha}R_{1}\left(\frac{y}{|z|}\right)\nabla W_{R} \|_{L^{p'}((0, R^{\beta});L^{q'}_{x})}
  \\
  &\ \ \ \leq  CR^{\beta/p'+(\alpha(n-2)+4\gamma)/2q'}\max \{R^{-(\gamma+1)}, R^{\alpha/2-3},R^{\beta-(\alpha+2)}, R^{-(\alpha/2+1)}\},
  \nonumber
\end{align}
for any $\beta > 0$. By \eqref{eq:epe70}, we have
\begin{align}\label{eq:quasieven}
  & \frac{\|W_{R}\|_{L^{p}((0,R^{\beta});L^{q}_{x})}}{\|\tilde{F}_{R}\|_{L^{p'}
  ((0,R^{\beta});L^{q'}_{x})}}
  \\
  &
  \geq
  \frac{\|W_{R}\|_{L^{p}((0,R^{\beta});L^{q}_{x})}}{\|F_{R}\|_{L^{p'}
  ((0,R^{\beta});L^{q'}_{x})}+\|\frac{|z|^2}{|z|^{2\alpha}}
  R_{2}\left(\frac{y}{|z|}\right)W_{R} + |z|^{1-\alpha}R_{1}\left(\frac{y}{|z|}\right)\nabla W_{R} \|_{L^{p'}((0, R^{\beta});L^{q'}_{x})}}.
  \nonumber
\end{align}
Finally, the thesis \eqref{eq:epe71} follows from \eqref{eq:epe22}, \eqref{eq:epe23}, \eqref{eq:epe67} and \eqref{eq:quasieven}, when the couple $(p,q)$ satisfies the Schr\"odinger admissibility condition.
\end{proof}
In order to conclude the proof of Theorem \ref{thm:even}, let us come back to the inhomogeneous Cauchy problem \eqref{eq:epe69}-\eqref{eq:epe69datum}. We have
\begin{equation}\label{eq:epe74}
 \|u_{R}\|_{L^{p}_{t}L^{q}_{x}} \geq \|u_{R}\|_{L^{p}((0, R^{\beta}); L^{q}_{x})} =  \|W_{R}\|_{L^{p}((0, R^{\beta}); L^{q}_{x})},
\end{equation}
from which it follows that
\begin{equation}\label{eq:epe75}
 \frac{\|u_{R}\|_{L^{p}_{t}L^{q}_{x}}}{\|f_{R}\|_{L^{2}_{x}} + \|\tilde{F}_{R}\|_{L^{p'}_{t}L^{q'}_{x}}} \geq \frac{ \|W_{R}\|_{L^{p}((0, R^{\beta}); L^{q}_{x})}}{\|f_{R}\|_{L^{2}_{x}} + \|\tilde{F}_{R}\|_{L^{p'}((0, R^{\beta}); L^{q'}_{x})}}
\end{equation} \medskip
\noindent
Notice that \eqref{eq:epe25} implies that for any $1 \leq p < \infty, \quad 1/2 < \gamma < 1$,
\begin{equation}\label{eq:epe76}
\frac{\|W_{R}\|_{L^{p}((0, R^{\beta}); L^{q}_{x})}}{\|f_{R}\|_{L^{2}_{x}}} \to +\infty,
\end{equation}
as $R\to+\infty$,
provided that $\beta > (\alpha(n-2)+4\gamma)/n$. On the other hand, the function $\delta(n, \gamma, \beta, p)$ defined in \eqref{eq:epe72}, varies continuously with $\alpha$, and in particular
\begin{align}\label{eq:epe77}
&\delta\left(n, \gamma, \frac{\alpha(n-2)+4\gamma}{n}, p\right)
\\
& =\min\left\{\frac{(n - 2)(2\gamma - \alpha)}{n}, \frac{(2 - \alpha)n + 6\alpha - 12\gamma}{n}, \frac{(2 - \alpha)n + 4\alpha - 8\gamma}{2n},
\right.
\nonumber
\\
&\ \ \ \ \left. \frac{3(2 - \alpha)n + 4\alpha - 8\gamma}{2n}, \frac{n(\gamma + 1) - (n - 2)\alpha - 4\gamma}{n}, \frac{(2 - \alpha)n + 4\alpha - 8\gamma}{n}\right\}.
\nonumber
\end{align}
 Hence $\delta$ is strictly positive if
 \begin{equation}\label{eq:rangeeven}
   \frac\alpha2 < \gamma < \frac\alpha2 + \frac{(2 - \alpha)n}{12}.
 \end{equation}
 Since $1<\alpha<2$, the range given by \eqref{eq:rangeeven} contains some $\gamma \in (1/2, 1)$; hence, by choosing $\beta > (\alpha(n-2)+4\gamma)/n$, we obtain $\delta>0$.
This remark, together with \eqref{eq:epe71}, gives
\begin{equation}\label{eq:epe78}
\frac{\|W_{R}\|_{L^{p}((0,R^{\beta});L^{q}_{x})}}{\|\tilde{F}_{R}\|_{L^{p'}((0,R^{\beta});L^{q'}_{x})}} \to +\infty,
\end{equation}
as $R\to+\infty$, and by \eqref{eq:epe75}, \eqref{eq:epe76}, \eqref{eq:epe78} we conclude that
\begin{equation}\label{eq:epe79}
\frac{\|u_{R}\|_{L^{p}_{t}L^{q}_{x}}}{\|f_{R}\|_{L^{2}_{x}} + \|\tilde{F}_{R}\|_{L^{p'}_{t}L^{q'}_{x}}} \to +\infty,
\end{equation}
as $R\to+\infty$.
The proof now proceeds exactly as in the odd case.

\end{document}